\documentclass[oneside,english]{amsart}

\usepackage[latin1]{inputenc} \pagestyle{plain} 

\usepackage{amssymb} \usepackage[]{epsf,epsfig,amsmath,amssymb,amsfonts,latexsym}

\newtheorem{theorem}{Theorem}[section]
\newtheorem{lemma}[theorem]{Lemma}

\newtheorem{cor}[theorem]{Corollary}

\theoremstyle{definition}
\newtheorem{defn}[theorem]{Definition}
\newtheorem{remark}[theorem]{Remark}

\newcommand{\PP}{\mathbb{P}}
\newcommand{\EE}{\mathbb{E}}
\newcommand{\stab}{\mathit{stab}}
\newcommand{\Map}{\mathit{Map}}
\newcommand{\OO}{\mathcal{O}}
\newcommand{\OOO}{\OO[M,\delta,\epsilon,F_1,F_2]}

\newcommand{\cov}{\mathit{cov}}
\newcommand{\sep}{\mathit{sep}}
\newcommand{\Prob}{\mathit{Prob}}
\newcommand{\SubG}{\mathit{Sub}_G}
\newcommand{\Av}{\frac{1}{|V|}\sum_{v \in V}}

\title{Positive sofic entropy implies finite stabilizer}
\author{Tom Meyerovitch}
\begin{document}
\maketitle
\begin{abstract}
We prove that for a measure preserving action of a sofic group with positive sofic entropy, the stabilizer is finite on a set of  positive measure.
This extends results of Weiss  and Seward for amenable groups and free groups, respectively.
It follows that  the action of a sofic group on its subgroups by inner automorphisms has zero topological sofic entropy, and that a faithful action that has completely positive sofic entropy must be free.
\end{abstract}

\section{Introduction}

The last decade brought a number of important developments in dynamics of  non-amenable group actions. Among these we note the various extensions of classical entropy theory. For actions of free groups, L. Bowen introduced a numerical  invariant known as \emph{$f$-invariant entropy} \cite{MR2630067}.
Some time later Bowen defined new invariants for actions of
\emph{sofic  groups},  called \emph{sofic entropy} \cite{MR2552252}. Kerr and Li  further developed  sofic entropy theory and also adapted it to groups actions on topological spaces by homeomorphisms \cite{MR2854085}.
The classical mean-entropy for amenable groups and Bowen's $f$-invariant both turned out to be special cases of sofic entropy \cite{MR2653969,MR3068400}.

The study of \emph{non-free measure-preserving  group actions} is another fruitful and active trend in dynamics. These are closely related to  the notion of \emph{invariant random subgroups}: Namely, a probability measure on the space of subgroups whose law is invariant under conjugation. Any such law can be realized as the law of the stabilizer for a random point for some probability  preserving action \cite{MR3165420}.
In this note we  prove the following:
\begin{theorem}\label{thm:positive_entropy_almost_free}
Suppose $G \curvearrowright (X,\mathcal{B},\mu)$ is an action of a countable sofic group $G$ that has  positive sofic entropy (with respect to some sofic approximation).
Then the  set of points  in $X$ with finite stabilizer has positive measure.
In particular, if the action is ergodic,  almost every point has finite stabilizer.
\end{theorem}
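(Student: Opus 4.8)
The plan is to prove the contrapositive: assuming that the set of points with finite stabilizer is \emph{null}, so that $\mu$-almost every point has infinite stabilizer, I will show that the sofic entropy with respect to the given approximation $\Sigma=(\sigma_n)$ is non-positive. The final ``in particular'' clause is then free, since the set $\{x:|\stab(x)|<\infty\}$ is $G$-invariant (because $\stab(gx)=g\,\stab(x)\,g^{-1}$ has the same cardinality as $\stab(x)$), so for an ergodic action it is null or co-null, whence positive measure upgrades to full measure. Throughout I use that $\stab\colon X\to\SubG$ is a measurable $G$-equivariant map whose pushforward is the stabilizer invariant random subgroup, now supported on infinite subgroups.

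The one quantitative input I would extract from infinitude is that the \emph{average local stabilizer size grows without bound}: for a finite window $W\subseteq G$,
\[
\EE\big[\,|\stab(x)\cap W|\,\big]=\sum_{s\in W}\mu(\mathrm{Fix}(s))\xrightarrow[\;W\uparrow G\;]{}\sum_{s\in G}\mu(\mathrm{Fix}(s))=\EE\big[\,|\stab(x)|\,\big]=\infty .
\]
In particular there are infinitely many $s\neq e$ with $\mu(\mathrm{Fix}(s))>0$, and $W$ may be chosen so that $\EE[|\stab(x)\cap W|]$ exceeds any prescribed $M$. I stress that a single element is useless here: an element of finite order can have a positive-measure fixed-point set in a positive-entropy action, so the argument must genuinely use that $\stab(x)$ is \emph{infinite as a group}, and it cannot assume $G$ torsion-free.

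The mechanism I would exploit is the tension between soficity and compressed orbits. A sofic approximation is asymptotically free, so for $s\neq e$ the permutation $\sigma_n(s)$ has $o(d_n)$ fixed points and the local pictures $\{\sigma_n(g)v:g\in F\}$ look like free orbits; yet the genuine orbit of a typical $x$ is the heavily compressed homogeneous space $\stab(x)\backslash G$. To see a good model as forced to imitate this, enlarge the chosen finite observable so that it also records the local stabilizer pattern $\stab(x)\cap W$; a good model then becomes a map $\phi\colon[d_n]\to A\times 2^{W}$ whose empirical $(F,\delta)$-statistics approximate those of the true name process. The decisive feature of the true process is a \emph{pinning} property: on the event $\{s\in\stab(x)\}$ one has $s^{-1}x=x$, so the entire name of $x$ coincides with that of $s^{-1}x$. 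A good model must approximately reproduce this, i.e.\ for most $v$ whose recorded local stabilizer contains $s$, the $\phi$-name at $\sigma_n(s)^{-1}v$ must nearly equal the $\phi$-name at $v$.

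Combining the pinning property over $s\in W$ with the bound $\EE[|\stab(x)\cap W|]>M$, a typical model point $v$ has its value linked by near-equalities to the values at many other points $\sigma_n(s)^{-1}v$. Organizing these links as a graph on $[d_n]$, I would argue that its average degree grows with $M$ while, by asymptotic freeness, the maps $\sigma_n(s)$ disperse points like random permutations, so the graph is well connected and has only $o(d_n)$ connected components; since $\phi$ is essentially constant on each component, the number of good models is at most $\exp(o(d_n))$, forcing the entropy to be $\le 0$. The main obstacle is precisely this collapsing step. The stabilizer is not one fixed subgroup but varies measurably across $X$, so the pinning constraints are \emph{local}, depending on the value $\phi(v)$ itself; the resulting link graph is therefore defined in terms of the very object being counted, making the estimate self-referential. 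Turning ``large average local stabilizer $+$ asymptotic freeness'' into an honest first-moment or covering bound on the number of good models — uniformly over the allowed $\phi$, and without any recourse to infinite-order elements or F\o lner sets — is where essentially all the work lies.
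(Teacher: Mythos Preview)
Your contrapositive strategy and your ``pinning graph'' heuristic are exactly the paper's approach, and you have correctly isolated the central obstruction: the link graph is defined in terms of the model $\phi$ itself. But you stop precisely there, so what you have written is a plan rather than a proof; the missing idea is the one that does all the work.

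The paper breaks the self-reference by a two-stage covering. It fixes two windows $F_1\subset F_2\Subset G$ and first finds, by Bernoulli sampling (Lemma~\ref{lem:small_random_set_dominating}), a small $F_2$-dominating set $C\subset V$ with $|C|=O(|V|/\sqrt{|F_2|})$. One then \emph{records} the approximate-stabilizer pattern $s_v:=\stab_{\delta,F_1}(\Pi_v^\xi(x))\cap F_1$ only at the vertices $v\in C$; this costs at most $2^{|F_1|\,|C|}=\exp(o(|V|))$ choices of $s$. For fixed $s$, the conjugation Lemma~\ref{lem:almost_stab_stab_implies_stab_conj} (which needs the two-window compatibility condition $\stab_{\delta,F_1}\cap F_1=\stab_{\delta,F_2^2}\cap F_1$) transports $s_{\xi_\tau(v)}$ back along an $F_2$-edge to produce $\stab(v,s)\subset F_2^3$ of size $\ge M$ for most $v$, and a pinning graph $\mathcal{G}_s$ that now depends only on $s$, not on $x$. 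Your proposed endgame via connected components would still fail at this point: the pinning inequality $d(x_v,x_w)<\delta$ is guaranteed only across a single edge and degrades to $\ell\delta$ along a path of length $\ell$, with no bound on component diameter. The paper instead applies the dominating-set lemma a \emph{second} time, to $\mathcal{G}_s$, producing $D\subset V$ with $|D|=O(|V|/\sqrt{M})$ such that most $v$ have a single edge into $D$; then any $\eta$-separated $\Omega_s$ injects into a $\delta$-separated subset of $(\chi^D,d^D_\infty)$, giving $\log|\Omega_s|\le |D|\log\sep_{\delta/2}(\chi,d)=o(|V|)$. One smaller point: your mean statement $\EE[|\stab(x)\cap W|]\to\infty$ is too weak for this scheme; what is actually used is that for every $M$ and $\epsilon$ there is $F_1$ with $\mu(\{x:|\stab(x)\cap F_1|\ge M\})>1-\epsilon$, which follows directly from $|\stab(x)|=\infty$ a.s.\ by monotone convergence (Lemma~\ref{lem:cond_F_1_holds}), not from the expectation alone.
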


The amenable case, Weiss  pointed out that the conclusion of Theorem \ref{thm:positive_entropy_almost_free} as a remark in the last section of his survey paper on actions of amenable groups \cite{MR2052281}. To be precise, Weiss stated the amenable case of Corollary \ref{cor:completely_positive_entropy} below.

Another interesting case of Theorem \ref{thm:positive_entropy_almost_free} for free groups is due to Seward \cite{1205.5090}. The result proved in   \cite{1205.5090} applies to the \emph{random sofic approximation}. By a non-trivial result of Bowen this coincides with the \emph{$f$-invariant}  for  free groups. Sewards's proof  in \cite{1205.5090} is based on a specific  formula for $f$-entropy, which does not seem to be available for sofic entropy in general. Our proof below proceeds essentially by proving a combinatorial statement about finite objects. In personal communication, Seward informed me of another proof of Theorem \ref{thm:positive_entropy_almost_free} that is  expected to appear in a forthcoming paper of  Alpeev and Seward  as a byproduct of their study of an entropy theory for general countable groups.

Theorem \ref{thm:positive_entropy_almost_free} confirms the point of view that
the ``usual''  notions of sofic entropy for sofic groups (or mean-entropy in the amenable case) are not very useful as invariants for non-free actions.  A  version of sofic entropy for actions with stabilizers  was developed by Bowen \cite{MR3286052}   as a particular instance of a more general framework of entropy theory of sofic groupoids. It seems likely that both the statement of Theorem \ref{thm:positive_entropy_almost_free} and our proof should have a  generalization to sofic class bijective extensions of groupoids. We will not pursue this direction.

\textbf{Acknowledgments.} I thank Yair Glasner, Guy Salomon, Brandon Seward and  Benjy Weiss for  interesting discussions.

\section{Notation and definitions}

\subsection{Sofic groups}

Sofic groups were introduced by Gromov \cite{MR1694588} (under a different name) and by Weiss \cite{MR1803462} towards the end of the millennium.
Sofic groups retain some properties of finite groups. They are
a common generalization of amenable and residually finite groups. We include a definition below. There are  several  other interesting equivalent definitions. There are many good references in  the literature for further background, motivation and discussions on sofic groups, for instance \cite{MR2460675}.

Throughout we will use the notation $F \Subset G$ to indicate that $F$ is a finite subset of $G$.
For a finite set $V$, let $S_V$ denote the group of permutations over $V$.
We will consider maps from a group $G$ to $S_V$. These maps are not necessarily  homomorphisms. Given  a map $\xi:G \to S_V$,  $g \in G$ and $v \in V$, we write $\xi_g \in S_V$ for the image of $g$ under $\xi$ and $\xi_g(v) \in V$  for the image of $v$ under the permutation $\xi_g$.

Let $F \Subset G$ and $\epsilon>0$.
A map $\xi:G \to S_V$  is called an \textbf{ $(F,\epsilon)$-approximation of $G$} if it satisfies the following properties:
\begin{equation}\label{eq:almost_hom}
\frac{1}{|V|} \# \{ v \in V~:~ \xi_g(\xi_h(v)) \ne \xi_{gh}(v) \} < \epsilon ~ \forall g,h \in F.
\end{equation}
and
\begin{equation}\label{eq:almost_free}
\frac{1}{|V|}| \# \{ v \in V~:~ \xi_g(v) = v \} < \epsilon ~ \forall g \in F\setminus \{1\}.
\end{equation}

A \textbf{sofic group} is a group $G$ that admits an $(F,\epsilon)$-approximation for any $F \Subset G$ and any $\epsilon >0$.

A  \textbf{symmetric  $(F,\epsilon)$-approximation of $G$} is $\xi:G \to S_V$ that in addition to  \eqref{eq:almost_hom} and \eqref{eq:almost_free} also satisfies
\begin{equation}\label{eq:almost_symmetric}
\xi_g(\xi_{g^{-1}}(v)) = v  ~ \forall g \in F\setminus \{1\}~,~ v \in V.
\end{equation}

Standard arguments show that a sofic group admits a symmetric $(F,\epsilon)$-approximation for any $F \Subset G$ and any $\epsilon >0$, so from now assume our $(F,\epsilon)$-approximations
 also satisfy \eqref{eq:almost_symmetric}.

A sequence $(\xi_n)_{n=1}^\infty$  of maps $\xi_n:G \to V_n$ is called a \textbf{sofic approximation for $G$} if $$\{ n \in \mathbb{N}~:~ \xi_n \mbox{ is an } (F,\epsilon)-\mbox{approximation}\}$$
is co-finite in $\mathbb{N}$, for any $F \Subset G$ and any $\epsilon >0$.

\subsection{Sofic entropy}

Roughly speaking the sofic entropy of an action is $h$ if  there are ``approximately'' $e^{h|V|}$ ``sufficiently distinct good approximations'' for the action that ``factor through'' a finite ``approximate action''  $\xi:G \to S_V$. Various  definitions have been introduced in the literature, that have been shown to lead to an equivalent notion. Most definitions involve  some auxiliary structure.
 Here we  follow a recent presentation of sofic entropy by Austin \cite{austin2015additivity}.  Ultimately, this presentation is  equivalent to Bowen's original definition and also to  definitions given by Kerr and Li.

Let $G \curvearrowright (X,\mu)$ be a probability preserving action on a standard probability space. As explained  in \cite{austin2015additivity}, by passing to an isomorphic action we can assume without loss of generality that $X = \chi^G$, where $\chi$ is a compact metric space and that the action of $G$ is the shift action: $g(x)_h = x_{g^{-1}h}$, and that $\mu \in \Prob(\chi^G)$ is a Borel probability measure on $\chi^G$, where the Borel structure is with respect to the product topology.

More specifically, we will assume that $\chi = \{0,1\}^\mathbb{N}$ is equipped with the metric $d(\omega,\omega') := 2^{-m(\omega,\omega')}$, where $m(\omega,\omega') := \min\{ n \in \mathbb{N}~:~ \omega_n \ne \omega'_n\}$.
These assumptions can be made without loss of generality. Indeed,  start with an arbitrary (standard) Borel space $X$,  choose a countable sequence $(A_n)_{n=1}^\infty$ of Borel subsets $A_n \subset X$ so that  the smallest $G$-invariant  $\sigma$-algebra containing $\{A_n\}_{n=1}^\infty$ is the Borel $\sigma$-algebra.  There is a $G$-equivariant Borel embedding  of  $x \in X$ to $\hat{x} \in \chi^G$ defined by
$$(\hat{x}_g)_n := 1_{g^{-1}A_n}(x) ~,~ n \in \mathbb{N}~,~ g\in G.$$
Let $\hat \mu \in \Prob((\chi^\mathbb{N})^G)$ denote the  push-forward  measure  of $\mu$, it follows that the $G$-action on $((\chi^\mathbb{N})^G,\hat \mu)$ is isomorphic to the $G$-action on $(X,\mu)$.
Also note that 
\begin{equation}\label{eq:d_range}
\forall \omega,\omega' \in \chi ~ d(\omega,\omega') \in \{0\} \cup \{2^{-n}~:~ n \in \mathbb{Z}_+\}.
\end{equation}
In particular, the diameter of $(\chi,d)$ is $1$.

We recall some definitions and notation that Austin introduced in  \cite{austin2015additivity}:
\begin{defn}
Given $x \in X = \chi^V$, $\xi:G \to S_V$ and $v \in V$,
the \textbf{pullback name of $x$ at $v$}, denoted by $\Pi_v^\xi(x) \in X= \chi^G$ is defined to be:
\begin{equation}
(\Pi_v^\xi(x))_{g^{-1}} := x_{\xi_g(v)}.
\end{equation}
The \textbf{empirical distribution} of $x$ with respect to $\xi$  is defined by:
\begin{equation}
P_x^\xi := \frac{1}{|V|}\sum_{v \in V}\delta_{\Pi_v^\xi(x)}.
\end{equation}
Given a $w^*$-neighborhood  $\OO \subset \Prob(\chi^G)$ of $\mu \in \Prob(\chi^G)$,
the set of \textbf{$(\OO,\xi)$-approximations} for the action  $G \curvearrowright (X,\mu)$
is given by
$$\Map(\OO,\xi) := \{ x \in \chi^V~:~ P_x^\xi \in \OO\}.$$

\end{defn}

In \cite{austin2015additivity}  elements of $\Map(\OO,\xi)$ are called  ``good models''.

The space $\Map(\OO,\xi) \subset \chi^V$, if it is non-empty, is considered as a metric space with respect to the following metric
$$d^V (x,x') := \frac{1}{V} \sum_{ v \in V} d(x_v,x'_v).$$

Given a compact metric space $(Y,\rho)$ and $\delta >0$ we denote by
$\sep_\delta(Y,\rho)$ the  maximal cardinality of a $\delta$-separated set in $(Y,\rho)$, and by
$\cov_\delta(Y,\rho)$ the minimal number of $\rho$-balls of radius $\delta$ needed to cover $Y$.
Let us recall  a couple of classical relations between these quantities.
Because distinct $2\delta$-separated points cannot be in the same $\delta$-ball the following holds:
$$ \sep_{2\delta}(Y,\rho) \le \cov_\delta(Y,\rho).$$
Consider a maximal $\delta$-separated set $Y_0 \subset Y$.
The collection of $\delta$-balls with centers in $Y_0$  covers $Y$. Thus:
$$ \cov_{\delta}(Y,\rho) \le \sep_{\delta}(Y,\rho).$$

\begin{defn}
Let $\Sigma = (\xi_n)_{n=1}^\infty$ be a sofic approximation of $G$, with $\xi_n:G \to S_{V_n}$. The \textbf{$\Sigma$-entropy} (or \textbf{sofic entropy} with respect to $\Sigma$) of $G \curvearrowright (X,\mu)$  is defined by:

\begin{equation}\label{eq:sofic_entropy_def}
 h_\Sigma(\mu) := \sup_{\delta >0} \inf_{\OO \ni \mu} \limsup_{ n \to \infty} \frac{1}{|V_n|}\log \sep_\delta\left( \Map(\OO,\xi_n),d^{V_n}\right),
 \end{equation}

where the infimum is over weak-$*$ neighborhoods $\OO$ of $\mu$ in $\Prob(X)$.
If $\Map(\OO,\xi_n) =\emptyset$ for all large $n$'s, define $ h_\Sigma(\mu) := -\infty$.

\end{defn}


The key fact is that the quantity $h_\Sigma(\mu)$ does not depend  on the topological model $X=\chi^G$ or on the choice of metric $d$, and is thus an  invariant for the action $G \curvearrowright (X,\mu)$, with respect to isomorphism in the class of probability preserving actions.

\begin{remark}
We recall a slight generalization of $\Sigma$-entropy:  A \textbf{random sofic approximation} is $\Sigma =(P_n)_{n=1}^\infty$ where $P_n \in \Prob((S_{V_n})^G)$  so that
 the conditions \eqref{eq:almost_hom} and \eqref{eq:almost_free} hold ``on average'' with respect to $P_n$  for any $\epsilon >0$ and $F\Subset G$, if $n$ is large enough.

In this case $\Sigma$-entropy is defined by
\begin{equation}\label{eq:random_sofic_entropy_def}
 h_\Sigma(\mu) := \sup_{\delta >0} \inf_{\OO \ni \mu} \limsup_{ n \to \infty} \frac{1}{|V_n|}\log\left(\int \sep_\delta\left( \Map(\OO,\xi),d^{V_n}\right)dP_n(\xi) \right).
 \end{equation}

For the special case $G$ is a free group on $d$ generators and $P_n$ is chosen uniformly among the homomorphisms from $G$ to the group of permutations of $\{1,\ldots,n\}$,
  Bowen proved that $\Sigma$-entropy coincides with the so called $f$-invariant \cite{MR2653969}.

Our proof of Theorem \ref{thm:positive_entropy_almost_free} applies directly with no changes to random sofic approximations, in particular to $f$-entropy.

\end{remark}

\subsection{Stabilizers and the space of subgroups}

Let $\SubG \subset 2^G$ denote the space of subgroups of $G$. The space $\SubG$ comes with a compact topology, inherited from the product topology on $2^G$. The group $G$ acts on $\SubG$ by inner automorphisms.
Now let $G \curvearrowright X$ be an action of $G$ on a standard Borel space $X$. For $x \in X$ let
\begin{equation}
\stab(x) := \left\{ g \in G~:~ g(x)=x\right\}.
\end{equation}
The map $\stab:X \to \SubG$ is Borel and  $G$-equivariant.

The following fact about  the map $\stab:X \to \SubG$  appears implicitly for instance in \cite{MR2052281}:
\begin{lemma}\label{prop:finite_stab}
Let $G \curvearrowright (X,\mu)$ be an ergodic action of a countable group.
If the action has finite stabilizers, the map
$\stab:X \to \SubG$ induces a finite factor $G \curvearrowright (\SubG, \mu \circ \stab^{-1})$.
\end{lemma}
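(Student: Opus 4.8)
The plan is to show that the push-forward measure $\nu := \mu \circ \stab^{-1}$ on $\SubG$ is concentrated on a \emph{finite} set of subgroups; since the conjugation action of $G$ then merely permutes these finitely many subgroups, the factor $G \curvearrowright (\SubG,\nu)$ is a finite measure-preserving system, as required. Two consequences of the hypotheses drive the argument. First, $\nu$ is invariant under the conjugation action: this is immediate from the $G$-equivariance of $\stab$ together with the $G$-invariance of $\mu$. Second, $\nu$ is \emph{ergodic} for this action: if $M \subseteq \SubG$ is conjugation-invariant, then $\stab^{-1}(M)$ is a $G$-invariant subset of $X$ by equivariance, so $\nu(M) = \mu(\stab^{-1}(M)) \in \{0,1\}$.

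The decisive observation is a countability fact. Because the action has finite stabilizers, $\nu$-almost every $H \in \SubG$ is a \emph{finite} subgroup of $G$; and because $G$ is countable, it has only countably many finite subsets, hence only countably many finite subgroups. Therefore $\nu$ is concentrated on a countable subset of $\SubG$, and is consequently purely atomic. (If one prefers, one may first note that $H \mapsto |H|$ is a conjugation-invariant, a.e.\ finite function, so by ergodicity $|H|=n$ is a.e.\ constant, and $\nu$ is concentrated on the countably many order-$n$ subgroups; this refinement is not needed but explains the shape of the support.)

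It then remains to promote ``purely atomic'' to ``finitely supported'' using ergodicity. Let $a := \sup_{H} \nu(\{H\})$, which is positive since $\nu$ is atomic, and let $M := \{ H \in \SubG : \nu(\{H\}) = a\}$ be the set of atoms of maximal mass. As the atom masses sum to $1$, the set $M$ is nonempty and finite, of cardinality at most $1/a$. It is conjugation-invariant, since conjugation-invariance of $\nu$ gives $\nu(\{gHg^{-1}\}) = \nu(\{H\})$ for every $g \in G$. Thus $M$ is a finite, conjugation-invariant set of positive $\nu$-measure, so by ergodicity $\nu(M) = 1$. Hence $\nu$ is supported on the finite set $M$, and the factor $G \curvearrowright (\SubG,\nu)$ is finite.

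I do not anticipate a serious obstacle. The only point meriting care is the verification that countability of $G$ genuinely forces $\nu$ onto a countable set, so that atomicity comes for free; after this, the elementary maximal-atom argument combined with ergodicity finishes the proof. The entire content is the interplay between finiteness of the stabilizing subgroups, countability of $G$, and ergodicity.
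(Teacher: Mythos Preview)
Your proof is correct and follows essentially the same approach as the paper: both argue that countability of $G$ forces $\nu=\mu\circ\stab^{-1}$ to be purely atomic on the (countable) set of finite subgroups, and then use ergodicity to conclude the support is a single finite orbit. The paper simply asserts the last step (``a purely atomic invariant probability measure must be supported on a single finite orbit, if it is ergodic''), whereas you spell it out via the maximal-atom argument; this is a harmless elaboration, not a different route.
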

\begin{proof}
 Suppose $\stab(x)$ is finite on a set of positive measure.  By ergodicity $|\stab(x)| <\infty$ on a set of full measure. Since there are only countably  many finite subgroups, the measure $\mu \circ \stab^{-1} \in \Prob(\SubG)$ must be purely atomic.  To finish the proof note that  a purely atomic invariant  probability measure must be supported on a single finite orbit, if it is ergodic.
\end{proof}

Here is a quick corollary of
Theorem  \ref{thm:positive_entropy_almost_free} that concerns the action $G \curvearrowright \mathit{Sub}_G$:
\begin{cor}\label{cor:IRS_zero_entropy}
Let $G$ be an infinite sofic group and $\Sigma$ a sofic approximation sequence. The topological $\Sigma$-entropy of the action $G \curvearrowright \mathit{Sub}_G$ by conjugation is zero.
\end{cor}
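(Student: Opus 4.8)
The plan is to reduce the statement to Theorem~\ref{thm:positive_entropy_almost_free} through the variational principle for sofic topological entropy. The space $\SubG$ is a closed, hence compact, subset of $2^G$, it is metrizable since $G$ is countable, and the conjugation action is continuous, so the variational principle of Kerr and Li \cite{MR2854085} applies: it identifies the topological $\Sigma$-entropy of $G \curvearrowright \SubG$ with the supremum of the measure-theoretic $\Sigma$-entropies $h_\Sigma(\mu)$ taken over all $G$-invariant Borel probability measures $\mu$ on $\SubG$. The point mass on the trivial subgroup $\{1\} \in \SubG$ is a fixed point of the conjugation action, so it is an invariant measure of entropy $0$; thus the supremum, and hence the topological $\Sigma$-entropy, is already at least $0$. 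It therefore suffices to prove that $h_\Sigma(\mu) \le 0$ for every $G$-invariant $\mu$.

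Arguing by contradiction, I would suppose that $h_\Sigma(\mu) > 0$ for some invariant $\mu$. By Theorem~\ref{thm:positive_entropy_almost_free} the set of subgroups with finite stabilizer then has positive $\mu$-measure. The stabilizer of $H \in \SubG$ under the conjugation action is its normalizer $N_G(H) = \{ g \in G : gHg^{-1} = H \}$, and since $H \le N_G(H)$, finiteness of the stabilizer forces $H$ itself to be finite. Consequently the $G$-invariant Borel set
\[
A := \{ H \in \SubG : N_G(H) \text{ is finite} \}
\]
satisfies $\mu(A) > 0$, and every element of $A$ is a finite subgroup of $G$.

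The contradiction comes from a counting argument that does not even require passing to ergodic components. Since $G$ is countable it has only countably many finite subsets, hence only countably many finite subgroups, so $A$ is countable and decomposes into countably many conjugation orbits. For each $H \in A$ the orbit has cardinality $[G : N_G(H)]$, which is infinite because $N_G(H)$ is finite while $G$ is infinite. On the other hand, no $G$-invariant probability measure can charge an infinite orbit: for two subgroups in the same orbit, say $H_2 = g H_1 g^{-1}$, invariance gives $\mu(\{H_2\}) = \mu(\{H_1\})$, so all atoms in a single orbit carry equal mass, and an infinite orbit with positive common atomic mass would have infinite total measure. Hence $\mu$ assigns measure zero to each orbit contained in $A$, and by countable additivity $\mu(A) = 0$, contradicting $\mu(A) > 0$. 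Therefore $h_\Sigma(\mu) \le 0$ for every invariant $\mu$, and combined with the lower bound the topological $\Sigma$-entropy equals $0$.

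The step to get right is the invocation of the variational principle, since that is precisely what transfers the measure-theoretic conclusion of Theorem~\ref{thm:positive_entropy_almost_free} into the topological setting; once this is in place the remaining orbit-counting is elementary. The only routine checks are that $\SubG$ satisfies the hypotheses of the variational principle and that $A$ is Borel and $G$-invariant, and I expect neither to present any genuine difficulty.
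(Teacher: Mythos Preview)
Your proof is correct and follows essentially the same route as the paper: reduce via the variational principle, apply Theorem~\ref{thm:positive_entropy_almost_free}, and use $H \le N_G(H)$ together with the countability of finite subgroups to show that the set $A$ is $\mu$-null. The only cosmetic differences are that you make the lower bound $\ge 0$ explicit and replace the paper's brief appeal to ergodic components by a direct orbit-mass argument, which is arguably cleaner.
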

\begin{proof}
The variational principle for  $\Sigma$-entropy states that the topological $\Sigma$-entropy of an action $G \curvearrowright X$ is equal to the supremum of the measure-theoretic $\Sigma$-entropy over all $G$-invariant measures \cite{MR2854085}.
It thus suffices to prove that any $G$-invariant measure on $\SubG$ has zero $\Sigma$-entropy.
By Theorem \ref{thm:positive_entropy_almost_free}, it is enough to show that the set $A =  \{ H \in \SubG~:~ |\stab(H)| < \infty\}$  is null.
Indeed, for any $H \in \SubG$, $H \subset \stab(H)$, because any subgroup is contained in its normalizer.  Thus groups $H \in \SubG$ with finite stabilizer must be finite, so $A$ is a countable set. Suppose $\mu(A) >0$. It follows that $\mu \mid A$ is purely atomic.
As in  Lemma \ref{prop:finite_stab}, each ergodic component of $\mu \mid A$ must be  supported on finite set. An action of an infinite group on a finite set can not have finite stabilizers.
This shows that $\mu(A) =0$.
\end{proof}

\section{Sampling from finite graphs}

In this section we prove an auxiliary result on finite labeled graphs.

We begin with some terminology:
\begin{defn}
A \textbf{finite , simple and directed graph} is a pair $\mathcal{G}= (V,E)$  where $V$ is a finite set and  $E \subset V^2$ (we allow self-loops but no parallel edges).
\begin{itemize}
\item The \textbf{out-degree} and \textbf{in-degree} of $v \in V$ are given by
$$\deg_{\mathit{out}}(v) := \left|\left\{ w \in V~:~ (v,w) \in E\right\}\right|,$$
$$\deg_{\mathit{in}}(v) := \left|\left\{ w \in V~:~ (w,v) \in E\right\}\right|,$$
\item $\mathcal{G}$ is \textbf{$(\epsilon,k,M)$-regular} if  at most $\epsilon |V|$ vertices have out-degree  less than $k$, and all vertices have in-degree at most $M$.
\item A set $W \subset V$ is \textbf{$\epsilon$-dominating} if the number of  vertices in $v \in V$ so that $\{ w \in W~:~ (v,w) \in E\} = \emptyset$ is at most  $\epsilon|V|$.
\item A \textbf{$p$-Bernoulli} set $W \subset V$ for $p \in (0,1)$ is a random subset of $V$ such that for each $v \in V$ the probability that $v \in W$ is $p$, independently of the other vertices.
\end{itemize}
\end{defn}

\begin{lemma}
\label{lem:small_random_set_dominating}
Fix any $\kappa \in (0,1)$. 
Suppose $k \le M \le  N$ satisfy
\begin{equation}\label{eq:k_M}
(1-\frac{1}{\sqrt{k}})^k< \kappa \mbox{ and } N > 2M^2\kappa^{-3}.
\end{equation}
 For any  $(\kappa,k,M)$-regular  graph $G=(V,E)$ with   $|V| >  N$,
a $\frac{1}{\sqrt{k}}$-Bernoulli subset
is  $3\kappa$-dominating and has size at most $\frac{2}{\sqrt{k}} |V|$ with  probability at least $1-\kappa$.
\end{lemma}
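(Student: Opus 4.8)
The plan is to bound the probabilities of the two bad events separately and combine them by a union bound. Write $p = 1/\sqrt{k}$ for the Bernoulli parameter and let $W \subset V$ be the random $p$-Bernoulli set. Say $W$ \emph{fails the size bound} if $|W| > \frac{2}{\sqrt{k}}|V|$, and \emph{fails to dominate} if more than $3\kappa|V|$ vertices have no out-neighbor in $W$. I aim to show each failure has probability a small multiple of $\kappa^2$, so that their sum is below $\kappa$; the numerical constants in \eqref{eq:k_M} are precisely what makes this work.

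The size bound is the easy half. Since $|W| = \sum_{v\in V}\mathbf{1}[v\in W]$ is a sum of independent Bernoulli$(p)$ variables, $\EE|W| = p|V|$ and $\mathrm{Var}(|W|) \le p|V|$. By Chebyshev's inequality $\PP(|W| > 2p|V|) \le \PP\bigl(\,\bigl||W| - p|V|\bigr| > p|V|\,\bigr) \le \frac{1}{p|V|} = \frac{\sqrt{k}}{|V|}$. As $k \le M$ and $|V| > N > 2M^2\kappa^{-3}$, this is at most $\frac{\kappa^3}{2}$.

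For domination, let $N^+(v) = \{w : (v,w)\in E\}$ be the out-neighborhood and $Y_v = \mathbf{1}[\,N^+(v)\cap W = \emptyset\,]$, so the number of non-dominated vertices is $Z = \sum_v Y_v$. Split $Z = Z_{\mathrm{low}} + Z_{\mathrm{high}}$ according to whether $\deg_{\mathit{out}}(v) < k$ or $\ge k$. By $(\kappa,k,M)$-regularity $Z_{\mathrm{low}} \le \kappa|V|$ deterministically, so it suffices to show $Z_{\mathrm{high}} \le 2\kappa|V|$ with high probability. For a high-degree vertex $\EE Y_v = (1-p)^{\deg_{\mathit{out}}(v)} \le (1-p)^k < \kappa$ by \eqref{eq:k_M}, whence $\EE Z_{\mathrm{high}} < \kappa|V|$. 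A plain Markov bound only yields failure probability $\tfrac12$, so the real work is a variance estimate.

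The crux is controlling the covariances, and this is where the in-degree bound $M$ enters. A direct computation gives, for distinct high-degree $v,v'$,
\[
\mathrm{Cov}(Y_v, Y_{v'}) = (1-p)^{|N^+(v)\cup N^+(v')|}\bigl(1 - (1-p)^{|N^+(v)\cap N^+(v')|}\bigr) \le (1-p)^k\, p\,|N^+(v)\cap N^+(v')|,
\]
using $|N^+(v)\cup N^+(v')| \ge \deg_{\mathit{out}}(v) \ge k$ and $1-(1-p)^s \le ps$. Summing, $\sum_{v\ne v'}|N^+(v)\cap N^+(v')| = \sum_w \deg_{\mathit{in}}(w)^2 \le M\sum_w \deg_{\mathit{in}}(w) = M|E| \le M^2|V|$, since every in-degree is at most $M$. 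Hence $\mathrm{Var}(Z_{\mathrm{high}}) \le \kappa|V| + \kappa pM^2|V|$, and Chebyshev's inequality together with $|V| > 2M^2\kappa^{-3}$ and $p \le 1$ bounds $\PP(Z_{\mathrm{high}} > 2\kappa|V|)$ by $O(\kappa^2)$. Adding the two failure probabilities and invoking \eqref{eq:k_M} once more yields the claimed lower bound $1-\kappa$. The main obstacle is exactly this variance estimate: the events $\{Y_v = 1\}$ are positively correlated whenever two vertices share an out-neighbor, and it is the hypothesis that every in-degree is at most $M$ — hence that at most $M$ vertices point to any common target — that keeps the total correlation of order $M^2|V|$ and lets the lower bound on $|V|$ absorb it.
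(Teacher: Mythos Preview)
Your proof is correct and follows essentially the same route as the paper: a second-moment (Chebyshev) argument for both $|W|$ and the count $Z$ of non-dominated vertices, with the in-degree bound $M$ controlling the total correlation among the indicators $Y_v$. Your covariance estimate is in fact sharper than the paper's (which simply bounds each nonzero covariance by $1$ and counts at most $M^2|V|$ dependent pairs, giving $\mathrm{Var}(Y)\le M^2|V|$ directly), and your closing ``$O(\kappa^2)$'' is a bit informal about the constants for $\kappa$ near $1$, but the argument is sound and the numerics go through with the cruder bound.
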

\begin{proof}
Suppose  \eqref{eq:k_M}  holds.
Let $\mathcal{G}=(V,E)$ be a graph satisfying the assumptions in the statement of the lemma,
 and let $W \subset V$ be $\frac{1}{\sqrt{k}}$-Bernoulli.

For $v \in V$, let $n(v)$ be number of edges $(v,w) \in E$ with  $w \in W$. The random variable $n(v)$ is Binomial $B(\frac{1}{\sqrt{k}},\deg_{\mathit{out}}(v))$.
Let
$$Y := \sum_{v \in V} 1_{[n(v)=0]}.$$
It follows that
$$E(Y) = \sum_{v \in V} P\left( n(v) = 0\right) = \sum_{v \in V~,~ \deg_{\mathit{out}}(v) < k } P\left( n(v) = 0\right) + \sum_{v \in V~,~ \deg_{\mathit{out}}(v) \ge  k } P\left( n(v) = 0\right).$$
Thus
$$E(Y)  \le \kappa|V|  +\left(1-\frac{1}{\sqrt{k}}\right)^k|V|< 2\kappa |V|.$$
For $v,w \in V$, the random variables $n(v)$ and $n(w)$ are independent, unless there is a common vertex $u \in V$ which both $(v,u) \in E$ and $(w,u) \in E$. Because the maximal in-degree is at most $M$, each $u \in V$ can account for at most $M^2$ such pairs, so  there are at most $M^2|V|$ pairs which are not independent.
Also note that $\mathit{Var}(1_{[n(v)=0]}) \le 1$ for every $v \in V$ so $\mathit{Cov}(1_{[n(v)=0]},1_{[n(w)=0]})\le 1$.
It follows that
$$\mathit{Var}(Y) = \sum_{v,w \in V}\mathit{Cov}(1_{[n(v)=0]},1_{[n(w)=0]})\le M^2 |V|.$$


By  Chebyshev's inequality,
the probability that $W$ is not  $3\kappa$-dominating is at most
$$P\left( Y > 3\kappa |V|) \le P( |Y - E(Y)| > \kappa |V|\right) \le \frac{\mathit{Var}(Y)}{\kappa^2|V|^2} \le \frac{M^2}{\kappa^2|V|} < \frac{\kappa}{2}.$$
Also $E(|W|) = \frac{1}{\sqrt{k}}|V|$ and
$\mathit{Var}(|W|) < \frac{1}{\sqrt{k}}|V|$, so again by Chebyshev's inequality
$$P( |W| > \frac{2}{\sqrt{k}} |V|) \le \frac{k\mathit{Var}(|W|)}{ |V|^2} =\frac{1}{ \sqrt{k} |V|} \le \frac{\kappa }{2}.$$
It follows that with probability at least $1- \kappa$, $W$ is  $3\kappa$-dominating and $|W| \le \frac{2}{\sqrt{k}} |V|$.
\end{proof}

\section{Proof of Theorem \ref{thm:positive_entropy_almost_free}}

Suppose $\stab(x)$ is infinite $\mu$-almost-surely. 
  Our goal is to prove that the sofic entropy of the $G$-action is non-positive with respect to any sofic approximation (in the case of a deterministic approximation sequence this means it is either $0$ or $-\infty$).
   By a direct inspection of the definition of sofic entropy in \eqref{eq:sofic_entropy_def}, our goal is to show that for  any $\eta >0$ given there exists a neighborhood $\OO \subset \Prob(X)$ of $\mu$ so that for any sufficiently good approximation $\xi:G \to S_V$,
   $$\frac{1}{|V|}\log \sep_{\eta}(\Map(\OO,\xi),d^V) < \eta.$$

We will show that we can choose the neighborhood $\OO \subset \Prob(X)$ to be of the form
$\OO= \OOO$ (see Definition \ref{def:OO} below), for some  parameters $F_1,F_2 \Subset G$ and $\epsilon, \delta >0$.

\begin{defn}\label{def:approx_stab}\textbf{(Approximate stabilizer)}
For $F \Subset G$ and $\delta >0$ and $x \in \chi^G$ let
\begin{equation}\label{eq:stab_delta_F_def}
\stab_{\delta,F}(x) := \bigcap_{h \in F}\{ g \in G ~:~ d(x_h,g(x)_h) < \delta \}.
\end{equation}
\end{defn}



\begin{defn}\label{def:OO}
Let  $\epsilon, \delta , M >0$ and $F_1,F_2\Subset G$. Define
$$\OO[M,\delta,\epsilon,F_1,F_2] \subset \Prob(X)$$  to be the set of probability measures $\nu \in \Prob(X)$ satisfying the following conditions:

\begin{equation}\label{eq:stab_big}
\nu(\{ x~:~ |\stab_{\delta,F_1}(x) \cap F_1| < M \}) < \epsilon
\end{equation}
\begin{equation}\label{eq:almost_stab_stab}
\nu\left(\left\{ x~:~ (\stab_{\delta,F_1}(x) \cap F_1) \ne (\stab_{\delta,F_2^2}(x) \cap F_1) \right\}\right) < \epsilon
\end{equation}

\end{defn}

\begin{lemma}\label{lem:OOO_is open}
If  $\delta^{-1}$ is not an integer power of $2$, the set
 $\OOO \subset \Prob(X)$ is open.
\end{lemma}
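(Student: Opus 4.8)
The goal is to show that $\OOO$ is an open subset of $\Prob(X)$ in the weak-$*$ topology. The plan is to express each of the two defining conditions \eqref{eq:stab_big} and \eqref{eq:almost_stab_stab} as a constraint of the form $\nu(A) < \epsilon$ where $A$ is a \emph{clopen} (equivalently, continuous-indicator) subset of $X = \chi^G$; once this is done, the map $\nu \mapsto \nu(A) = \int 1_A \, d\nu$ is $w^*$-continuous because $1_A$ is a bounded continuous function, and a finite intersection of preimages of the open half-line $(-\infty,\epsilon)$ under continuous maps is open. So the whole statement reduces to a topological claim about the sets involved.

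The heart of the matter is therefore to verify that the sets $\{ x : |\stab_{\delta,F_1}(x) \cap F_1| < M \}$ and $\{ x : (\stab_{\delta,F_1}(x) \cap F_1) \ne (\stab_{\delta,F_2^2}(x) \cap F_1) \}$ are clopen. Both are built out of the elementary events $\{ x : g \in \stab_{\delta,F}(x)\}$ for $g \in G$ and $F \Subset G$. By Definition \ref{def:approx_stab}, $g \in \stab_{\delta,F}(x)$ means $d(x_h, g(x)_h) < \delta$ for every $h \in F$, and since $g(x)_h = x_{g^{-1}h}$, this is a condition on finitely many coordinates $x_h$ and $x_{g^{-1}h}$ of $x$. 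First I would observe that the map $x \mapsto d(x_h, x_{g^{-1}h})$ is continuous, so $\{ x : d(x_h,x_{g^{-1}h}) < \delta\}$ is open. The hypothesis that $\delta^{-1}$ is not an integer power of $2$ is exactly what upgrades this to clopen: by \eqref{eq:d_range} the metric $d$ takes values only in $\{0\} \cup \{2^{-n} : n \in \ZZ_+\}$, so the strict inequality $d(\cdot,\cdot) < \delta$ is equivalent to $d(\cdot,\cdot) \le \delta$ precisely because no attainable distance equals $\delta$. Hence the sublevel set has empty topological boundary, i.e. it is clopen.

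From this building block the rest is bookkeeping. The set $\{ x : g \in \stab_{\delta,F_1}(x) \cap F_1\}$ is, for fixed $g \in F_1$, a finite intersection over $h \in F_1$ of clopen sets, hence clopen. Since $F_1$ is finite, the cardinality $|\stab_{\delta,F_1}(x) \cap F_1|$ is a finite sum $\sum_{g \in F_1} 1_{[g \in \stab_{\delta,F_1}(x)]}$ of clopen indicators, so it is a locally constant (hence continuous) integer-valued function of $x$; consequently its sublevel set $\{ |\stab_{\delta,F_1}(x) \cap F_1| < M\}$ is clopen. For \eqref{eq:almost_stab_stab}, the two sets $\stab_{\delta,F_1}(x) \cap F_1$ and $\stab_{\delta,F_2^2}(x) \cap F_1$ are each determined, element by element over $g \in F_1$, by clopen conditions (here $F_2^2 = \{ h_1 h_2 : h_1,h_2 \in F_2\}$ is again a finite subset of $G$), so the event that they differ is a finite union over $g \in F_1$ of the clopen symmetric-difference events, hence clopen.

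I do not expect a serious obstacle here; the only genuinely substantive point is the clopenness coming from the value restriction on $d$, and the role of the hypothesis on $\delta^{-1}$. It is worth being careful that the strict inequalities in the definitions of the sets are compatible with the weak-$*$ continuity of integration against the indicator: the real content is that each defining set is a continuity set for every measure (indeed its boundary is empty), so no appeal to the portmanteau theorem with a qualification about $\nu$-null boundaries is needed. Once clopenness is established, openness of $\OOO$ follows formally from continuity of $\nu \mapsto \int 1_A \, d\nu$ and the fact that $\OOO$ is a finite intersection of sets of the form $\{\nu : \int 1_A \, d\nu < \epsilon\}$.
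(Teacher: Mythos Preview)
Your proof is correct and follows essentially the same approach as the paper: you reduce to showing that the defining sets are clopen, use the value restriction \eqref{eq:d_range} together with the hypothesis on $\delta^{-1}$ to see that $d(\cdot,\cdot)<\delta$ and $d(\cdot,\cdot)\le\delta$ coincide (making the elementary events clopen), and then conclude by weak-$*$ continuity of integration against the continuous indicators $1_A,1_B$. The only difference is that you spell out the bookkeeping (locally constant counting function, symmetric-difference decomposition) a bit more explicitly than the paper does.
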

\begin{proof}
Suppose $\delta^{-1}$ is not an integer power of $2$. By \eqref{eq:d_range} it follows that  $d(\omega,\omega') < \delta$ if and only if $d(\omega,\omega')  \le \delta$.
So for every $F \Subset G$,
$$\stab_{\delta,F}(x) = \bigcap_{h \in F}\{ g \in G ~:~ d(x_h,g(x)_h) <  \delta\} =  \bigcap_{h \in F}\{ g \in G ~:~ d(x_h,g(x)_h) \le \delta\}.$$
It follows that for any $g \in G$ and $F \Subset G$ the set
$\{x \in X~:~ g \in \stab_{\delta,F}(x)\}$ is a \textbf{clopen} set: It is both open and closed in $X$.

Because $F_1$ and $F_2$ are both finite,
$$A :=  \left\{ x \in X~:~  (\stab_{\delta,F_1}(x) \cap F_1) \ne (\stab_{\delta,F_2^2}(x) \cap F_1) \right\}$$
 and
$$B := \left\{ x \in X~:~ |\stab_{\delta,F_1}(x) \cap F_1| < M \right\}$$ are  also clopen in $X$.
So the indicator functions $1_A,1_B: X  \to \mathbb{R}$ are continuous.
Now
$$\OOO = \left\{ \mu \in \Prob(X)~:~ \int 1_A(x) d\mu(x) < \epsilon \mbox{ and } \int 1_B(x) d\mu(x) < \epsilon \right\},$$
 so $\OOO \subset \Prob(X)$ is an open set.
\end{proof}

We now specify how to choose the parameters  $\epsilon >0$, $\delta >0$, $M >0$ $F_1 \Subset G$ and $F_2 \subset G$ are chosen according to $\eta$:

\begin{itemize}
\item Choose $\epsilon$ so that
\begin{equation}\label{eq:cond_epsilon}
0 < \epsilon < \min\{\frac{\eta}{100},\frac{1}{3}\}.
\end{equation}
\item Choose $\delta >0$, so that $\delta^{-1}$ is not an integer power of $2$ and a finite subset $F_0 \Subset G$ depending on $\epsilon$ and on the measure $\mu$ so that
\begin{equation}\label{eq:cond_delta} 
\mu \left(\left\{x \in X~:~ \stab_{\delta,F_0}(x) \ne \stab(x)  \right\} \right) < \epsilon/2,
\end{equation}
Where  $\stab_{\delta,F_0}(x)$ is defined  in Definition \ref{def:approx_stab} above.
The is possible by Lemma \ref{lem:cond_delta_holds} below.
We also require
\begin{equation}\label{eq:cond_delta_2}
3\delta < \eta - 100\epsilon
\end{equation}
\item Choose $M >0$ depending on $\epsilon$  and $\delta$ big enough so that
\begin{equation}\label{eq:cond_M}
\sup_{n >M}(1- \frac{1}{\sqrt{n}})^n < \epsilon/3 \mbox{ and }
\frac{4}{\sqrt{M}}\log \sep_{\delta/2}(\chi,d) < \frac{\eta}{2}.
\end{equation}
It is clear that the left hand side in both expressions tends to $0$ as $M \to \infty$, so such choice of $M$  is indeed possible.
\item Choose a finite subset $F_1 \Subset G$ depending on $M$ on $\epsilon$ and on the measure $\mu$ so that $F_0 \cup \{1\} \subset  F_1$ and
\begin{equation}\label{eq:cond_F1}
\mu\left(\left\{  x\in X~:~ |\stab(x) \cap F_1| \le M \right\}\right) < \epsilon/2.
\end{equation}
We prove the existence of such a set $F_1$  in Lemma \ref{lem:cond_F_1_holds} below.
\item Choose another finite subset $F_2  \Subset G$ so that $F_1 \subset F_2$, $F_2 = F_2^{-1}$ and
\begin{equation}\label{eq:cond_F2}
\frac{2}{\sqrt{|F_2|}}|F_1|\log(2) < \frac{\eta}{2}
\end{equation}
\item Choose $V$ big enough so that
\begin{equation}\label{eq:cond_V}
|V| > 2 |F_2|^2(\epsilon/3)^{-3}.
\end{equation}
\item Choose  $\xi:G \to S_V$ to be a $(F_2^6,\epsilon/3)$-approximation of $G$.
\end{itemize}

\begin{lemma}\label{lem:cond_delta_holds}
For any measure $\mu \in \Prob(X)$ and $\epsilon >0$ \eqref{eq:cond_delta} holds for some $F_0 \subset G$ and sufficiently small $\delta >0$.
\end{lemma}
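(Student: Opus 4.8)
The plan is to deduce \eqref{eq:cond_delta} from the continuity of $\mu$ from above, after isolating two structural properties of the approximate stabilizer. First I would record the inclusion $\stab(x)\subseteq\stab_{\delta,F}(x)$, valid for every $\delta>0$ and every $F\Subset G$: if $g(x)=x$ then $d(x_h,g(x)_h)=0<\delta$ for all $h\in F$, so $g\in\stab_{\delta,F}(x)$. Hence the bad set $B_{\delta,F}:=\{x:\stab_{\delta,F}(x)\ne\stab(x)\}$ equals $\{x:\stab_{\delta,F}(x)\supsetneq\stab(x)\}$, and it is monotone: enlarging $F$ or decreasing $\delta$ shrinks $\stab_{\delta,F}(x)$, hence shrinks $B_{\delta,F}$.

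Second, I would establish the pointwise identity $\bigcap_{F\Subset G,\ \delta>0}\stab_{\delta,F}(x)=\stab(x)$. For a fixed $g\in G$ one has $g\in\stab_{\delta,F}(x)$ for all $F,\delta$ if and only if $d(x_h,x_{g^{-1}h})=0$ for every $h\in G$, i.e. $g(x)=x$. Fixing $g$ together with an exhaustion $F_n\uparrow G$ and $\delta_n\downarrow 0$, this gives $\{x:g\in\stab_{\delta_n,F_n}(x)\}\downarrow\{x:g\in\stab(x)\}$, so by continuity of $\mu$ from above, $\mu(\{x:g\in\stab_{\delta_n,F_n}(x)\text{ and }g\notin\stab(x)\})\to 0$. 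In words: each fixed non-stabilizing element is, with probability tending to one, eventually expelled from the approximate stabilizer once $F$ is large and $\delta$ small.

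The main obstacle is promoting this element-by-element control to the simultaneous comparison of the full subgroups demanded by \eqref{eq:cond_delta}. One cannot simply intersect the sets $B_{\delta_n,F_n}$ and invoke the identity above: each $\stab_{\delta_n,F_n}(x)$ may retain spurious elements (by Poincar\'e recurrence the name $x_{g^{-1}h}$ returns close to $x_h$ for infinitely many $g$), and a crude union bound over $g\in G$ diverges. The way I would get around this is to exploit that the remainder of the argument never tests membership outside a single prescribed finite window: Definition \ref{def:OO} and the ensuing choice of parameters involve $\stab_{\delta,\cdot}(x)$ only through its intersection with the fixed finite set $S$ playing the role of the test window. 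It therefore suffices to control the finitely many $g\in S$: taking $\epsilon'=\epsilon/(2|S|)$, choosing for each $g\in S$ a pair $(\delta_g,F_g)$ as in the previous paragraph, and setting $F_0:=\bigcup_{g\in S}F_g$ and $\delta:=\min_{g\in S}\delta_g$, a finite union bound yields the required estimate. Finally I would shrink $\delta$ slightly, if necessary, so that $\delta^{-1}$ is not an integer power of $2$ (as needed for Lemma \ref{lem:OOO_is open}), which changes none of the measures above.
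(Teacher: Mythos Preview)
Your diagnosis is sharper than the paper's own argument: the paper simply records $\bigcap_{\delta,F}\stab_{\delta,F}(x)=\stab(x)$ together with monotonicity and then asserts, ``by $\sigma$-additivity,'' that $\inf_{\delta,F}\mu(B_{\delta,F})=0$. That is precisely the step you flag as unjustified, and your Poincar\'e-recurrence remark is decisive. In fact the lemma is false as written: for any free ergodic action (e.g.\ a Bernoulli shift) one has $\stab(x)=\{1\}$ almost surely, while for every finite $F$ and every $\delta>0$ the set $\stab_{\delta,F}(x)$ contains all return times of the $F$-name to its $\delta$-neighbourhood and is therefore infinite; hence $\mu(B_{\delta,F})=1$ for every choice of parameters. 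So you are right that what is actually provable---and all that the rest of the paper uses---is the intersected version $\stab_{\delta,F_0}(x)\cap S=\stab(x)\cap S$ for a finite window $S$.

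However, your proposed fix does not close the argument as it stands, because of the order in which the paper chooses its parameters. The window $S$ you need is $F_1$, and in the paper $F_1$ is selected \emph{after} $\delta$: one first fixes $\delta$, then picks $M$ large enough that $\tfrac{4}{\sqrt{M}}\log\sep_{\delta/2}(\chi,d)<\eta/2$ (so $M$ grows as $\delta\to 0$), and only then chooses $F_1$ large enough that $|\stab(x)\cap F_1|>M$ with high probability. Your construction sets $\delta=\min_{g\in S}\delta_g$, making $\delta$ depend on $S=F_1$, which depends on $M$, which depends on $\delta$---a genuine circularity that your sketch does not resolve. Note also that enlarging $F_0$ alone, with $\delta$ held fixed, need not suffice: for fixed $g$ and fixed $\delta$ the decreasing limit $\bigcap_{F}\{x:g\in\stab_{\delta,F}(x)\}$ is $\{x:\sup_h d(x_h,x_{g^{-1}h})<\delta\}$, which can strictly contain $\{x:g\in\stab(x)\}$. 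To carry your idea through one must reorganize the parameter cascade (or replace \eqref{eq:cond_delta} by its $F_1$-intersected form and relocate that choice to after $F_1$ is fixed), and this restructuring is missing from the proposal.
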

\begin{proof}
Note that
\begin{equation}\label{eq:stab_approx_stab_intersect}
\stab(x) = \bigcap_{\delta >0}\bigcap_{F \Subset G} \stab_{\delta,F}(x).
\end{equation}
Also, $\stab_{\delta_1,F_1}(x) \subset \stab_{\delta_2,F_2}(x)$ whenever $\delta_1 \le  \delta_2$ and $F_2 \subseteq F_1$.
So by $\sigma$-additivity of $\mu$,
$$ \inf_{\delta > 0~,~F \Subset G}\mu\left(\left\{x \in X~:~ \stab_{\delta,F}(x) \ne \stab(x)  \right\} \right) =  0.$$
It follows that \eqref{eq:cond_delta} holds for some $F_0 \subset G$ and sufficiently small $\delta >0$.
\end{proof}

\begin{lemma}\label{lem:cond_F_1_holds}
Under the assumption that $\stab(x)$ is infinite $\mu$-almost-surely, for every $M >0$ and $\epsilon >0$ there exists $F_1 \Subset G$ so that $F_0 \cup \{1\} \subset F$ and \eqref{eq:cond_F1} holds.
\end{lemma}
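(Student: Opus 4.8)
The plan is to deduce \eqref{eq:cond_F1} from the hypothesis by a continuity-of-measure argument along an exhaustion of $G$. Since $G$ is countable, I would fix an enumeration $G = \{g_1, g_2, \ldots\}$ and set $E_n := F_0 \cup \{1\} \cup \{g_1, \ldots, g_n\}$, so that each $E_n \Subset G$ contains $F_0 \cup \{1\}$, the sequence is increasing, and $\bigcup_n E_n = G$. For the given $M$ I would then consider the measurable sets
\begin{equation*}
B_n := \left\{ x \in X ~:~ |\stab(x) \cap E_n| \le M \right\},
\end{equation*}
which are Borel because the map $\stab : X \to \SubG$ is Borel.

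The key observation is that $(B_n)_{n=1}^\infty$ is a decreasing sequence whose intersection is $\{ x : |\stab(x)| \le M \}$. Indeed, since $E_n \nearrow G$, the count $|\stab(x) \cap E_n|$ is nondecreasing in $n$ and its supremum equals $|\stab(x)|$; hence $x$ lies in every $B_n$ exactly when $|\stab(x)| \le M$. Under the standing assumption that $\stab(x)$ is infinite $\mu$-almost surely, we have $|\stab(x)| > M$ for $\mu$-almost every $x$, so $\bigcap_n B_n$ is $\mu$-null.

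Because $\mu$ is a probability measure (so $\mu(B_1) \le 1 < \infty$), continuity of measure from above yields $\mu(B_n) \to \mu\big(\bigcap_n B_n\big) = 0$. I would therefore pick $n$ large enough that $\mu(B_n) < \epsilon/2$ and take $F_1 := E_n$; by construction $F_0 \cup \{1\} \subseteq F_1 \Subset G$ and $\mu(\{x : |\stab(x) \cap F_1| \le M\}) < \epsilon/2$, which is precisely \eqref{eq:cond_F1}. I do not anticipate a genuine obstacle here: the argument is routine, and the only point that warrants a moment's care is verifying that $\bigcap_n B_n$ is exactly $\{x : |\stab(x)| \le M\}$ so that the hypothesis applies, together with the (standard) use of continuity from above for a decreasing sequence of finite-measure sets.
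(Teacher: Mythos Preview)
Your proof is correct and follows essentially the same approach as the paper: both argue that $\{x:|\stab(x)|\le M\}=\bigcap_F\{x:|\stab(x)\cap F|\le M\}$ is $\mu$-null and then invoke continuity of measure (the paper phrases it as $\sigma$-additivity) to find a finite $F_1$ with the desired bound, enlarging if necessary to contain $F_0\cup\{1\}$. Your version is slightly more explicit in fixing a concrete increasing exhaustion $E_n\nearrow G$ from the outset, but this is only a cosmetic difference.
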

\begin{proof}
Because $\stab(x) = \infty$ $\mu$-a.e, it follows that for any $M >0$,

$$ \mu\left(\left\{x \in X~:~ |\stab(x)| \le M  \right\} \right) =0,$$
Note that
$$ \left\{x \in X~:~ |\stab(x)| \le  M  \right\}  =\bigcap_{F \Subset G} \left\{x \in X~:~ |\stab(x) \cap F| \le M  \right\},$$
So as in the proof of Lemma \ref{lem:cond_delta_holds} using  $\sigma$-additivity of $\mu$, it follows that
\eqref{eq:cond_F1} holds for some $F_1 \Subset G$. Furthermore, we can assume that $F_0 \cup \{1\} \subset F_1$ by further increasing $F_1$.
\end{proof}

\begin{lemma}\label{lem:mu_in_OOO}
For $M >0$, $\epsilon,\delta >0$ and $F_1,F_2 \Subset G$ as above,
$\mu \in \OOO$.	
\end{lemma}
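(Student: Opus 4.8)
The plan is to verify that $\mu$ satisfies the two defining inequalities \eqref{eq:stab_big} and \eqref{eq:almost_stab_stab} of $\OOO$, exploiting the specific way the parameters were chosen. The whole point of conditions \eqref{eq:cond_delta} and \eqref{eq:cond_F1} is to compare the \emph{approximate} stabilizers $\stab_{\delta,F}(x)$ against the \emph{genuine} stabilizer $\stab(x)$ on a set of measure at least $1-\epsilon$, so the strategy is to pass everything through $\stab(x)$ and collect error terms.

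For inequality \eqref{eq:stab_big}, I would argue as follows. By \eqref{eq:cond_delta} we have $\stab_{\delta,F_0}(x) = \stab(x)$ off a set of measure $<\epsilon/2$; since $F_0 \subset F_1$ and the approximate stabilizer is monotone (larger test set gives smaller stabilizer, as noted in Lemma \ref{lem:cond_delta_holds}), we get $\stab_{\delta,F_1}(x) \subset \stab_{\delta,F_0}(x) = \stab(x)$, and in fact on this good set $\stab_{\delta,F_1}(x)\cap F_1 = \stab(x)\cap F_1$ after also controlling the reverse inclusion (every genuine stabilizer element is trivially an approximate stabilizer element, so $\stab(x)\cap F_1 \subset \stab_{\delta,F_1}(x)\cap F_1$ always holds). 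Hence off a set of measure $<\epsilon/2$, the quantity $|\stab_{\delta,F_1}(x)\cap F_1|$ equals $|\stab(x)\cap F_1|$. Now \eqref{eq:cond_F1} says $|\stab(x)\cap F_1| > M$ off a set of measure $<\epsilon/2$. Combining the two exceptional sets, $|\stab_{\delta,F_1}(x)\cap F_1| \ge M$ off a set of measure $<\epsilon$, which is \eqref{eq:stab_big}.

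For inequality \eqref{eq:almost_stab_stab}, I would again intersect with the good set from \eqref{eq:cond_delta}. On that set $\stab_{\delta,F_0}(x)=\stab(x)$. Because $F_0 \subset F_1 \subset F_2 \subset F_2^2$, monotonicity sandwiches $\stab_{\delta,F_2^2}(x) \subset \stab_{\delta,F_1}(x) \subset \stab_{\delta,F_0}(x) = \stab(x)$; intersecting with $F_1$ and using that genuine stabilizer elements always lie in every approximate stabilizer gives $\stab(x)\cap F_1 \subset \stab_{\delta,F_2^2}(x)\cap F_1 \subset \stab_{\delta,F_1}(x)\cap F_1 \subset \stab(x)\cap F_1$, forcing all three sets to coincide. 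Thus the event in \eqref{eq:almost_stab_stab} is contained in the exceptional set of \eqref{eq:cond_delta}, which has measure $<\epsilon/2 < \epsilon$.

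The step I expect to require the most care is the monotonicity-and-reverse-inclusion bookkeeping: one must be precise that $\stab(x)\cap F_1 \subseteq \stab_{\delta,F}(x)\cap F_1$ holds unconditionally (since $g\in\stab(x)$ gives $g(x)=x$, hence $d(x_h,g(x)_h)=0<\delta$ for all $h$), while the reverse inclusion only holds on the good set and only after using that $F_0\subset F_1$ makes $\stab_{\delta,F_1}$ at least as restrictive as $\stab_{\delta,F_0}$. Getting the containment of finite-index truncations right, and confirming that the two exceptional sets are disjointly accounted for so their measures add to less than $\epsilon$ rather than being double-counted, is where the argument could slip. Everything else is a routine union bound over the events supplied by \eqref{eq:cond_delta} and \eqref{eq:cond_F1}.
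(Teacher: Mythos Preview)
Your proposal is correct and follows essentially the same route as the paper: pass through the genuine stabilizer via the inclusions $\stab(x)\subseteq\stab_{\delta,F_2^2}(x)\subseteq\stab_{\delta,F_1}(x)\subseteq\stab_{\delta,F_0}(x)$, then invoke \eqref{eq:cond_delta} and \eqref{eq:cond_F1}. One minor simplification you may note: for \eqref{eq:stab_big} the unconditional inclusion $\stab(x)\cap F_1\subseteq\stab_{\delta,F_1}(x)\cap F_1$ together with \eqref{eq:cond_F1} alone already gives the bound with exceptional measure $<\epsilon/2$, so the appeal to \eqref{eq:cond_delta} is not actually needed there.
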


\begin{proof}
Because $F_1 \subset F_2$, it follows that
$$ \stab_{\delta,F_1}(x) \subseteq \stab_{\delta,F_2^2}(x) \subseteq \stab(x).$$
So by \eqref{eq:cond_delta} it follows that \eqref{eq:almost_stab_stab} also holds with $\nu$ replaced by $\mu$.
Using \eqref{eq:cond_delta} and \eqref{eq:cond_F1}  and the condition $F_0 \subset F_1$ we see that \eqref{eq:stab_big} holds with $\nu$ replaced by $\mu$.

Thus  $\mu \in \OOO$.

\end{proof}




The following lemma shows that approximate stabilizers behave well under conjugation:
\begin{lemma}\label{lem:almost_stab_stab_implies_stab_conj}
If $F_1 \subset F_2=F_2^{-1}$ and $x\in X$ satisfies
\begin{equation}\label{eq:almost_stab_stab_pointwise}
(\stab_{\delta,F_1}(x) \cap F_1) = (\stab_{\delta,F_2^2}(x) \cap F_1)
\end{equation}
then
\end{lemma}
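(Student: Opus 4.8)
The plan is to reduce the statement to a single \emph{exact} covariance identity for approximate stabilizers and then to use the hypothesis \eqref{eq:almost_stab_stab_pointwise} to absorb the one awkward, $g$-dependent test-set that this identity produces. Concretely, I expect the conclusion to be the approximate analogue of the familiar relation $g\,\stab(x)\,g^{-1}=\stab(gx)$: that for every $g \in F_2$ one has
\begin{equation}
g\left(\stab_{\delta,F_1}(x) \cap F_1\right)g^{-1} \subseteq \stab_{\delta,F_1}(gx).
\end{equation}
The first step is to record an identity valid for \emph{any} $x \in \chi^G$, any $\delta>0$, any $F \Subset G$ and any $g \in G$, with no use of the hypothesis:
\begin{equation}\label{eq:covariance}
\stab_{\delta,F}(gx) = g\,\stab_{\delta,\,g^{-1}F}(x)\,g^{-1}.
\end{equation}
This is a direct unwinding of Definition \ref{def:approx_stab}: for $s\in G$ one has $gsg^{-1}\in\stab_{\delta,F}(gx)$ iff $d\big((gx)_h,((gs)x)_h\big)<\delta$ for all $h\in F$ (using $(gsg^{-1})(gx)=(gs)x$), iff $d(x_{g^{-1}h},x_{s^{-1}g^{-1}h})<\delta$ for all $h\in F$, iff, substituting $h'=g^{-1}h$, $d(x_{h'},x_{s^{-1}h'})<\delta$ for all $h'\in g^{-1}F$, which is exactly $s\in\stab_{\delta,g^{-1}F}(x)$.

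Next I would specialize \eqref{eq:covariance} to $F=F_1$ and $g\in F_2$ and feed in the hypothesis. Since $F_1\subseteq F_2=F_2^{-1}$, we have $g^{-1}F_1\subseteq F_2\cdot F_2=F_2^2$, so testing on $F_2^2$ is at least as stringent as testing on $g^{-1}F_1$, whence $\stab_{\delta,F_2^2}(x)\subseteq\stab_{\delta,g^{-1}F_1}(x)$. Now take $s\in\stab_{\delta,F_1}(x)\cap F_1$; by \eqref{eq:almost_stab_stab_pointwise} this $s$ already lies in $\stab_{\delta,F_2^2}(x)$, hence in $\stab_{\delta,g^{-1}F_1}(x)$, and therefore by \eqref{eq:covariance} we get $gsg^{-1}\in\stab_{\delta,F_1}(gx)$. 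This is precisely the claimed containment, and by running the same argument with $g$ replaced by $g^{-1}\in F_2$ one can upgrade it to an equality of the $F_1$-parts if that is what is needed downstream.

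The main (and essentially only) obstacle is conceptual rather than computational: approximate stabilizers are not groups, and the triangle inequality only yields $2\delta$, so a naive conjugation $s\mapsto gsg^{-1}$ would either lose a metric factor or force us to test membership on the shifted, $g$-dependent set $g^{-1}F_1$ rather than on $F_1$. The whole point of condition \eqref{eq:almost_stab_stab_pointwise}, together with the choice $F_2=F_2^{-1}$ and $F_2^2\supseteq g^{-1}F_1$, is to certify at \emph{no} metric cost that membership in the easy set $\stab_{\delta,F_1}(x)\cap F_1$ already guarantees membership in the harder set $\stab_{\delta,g^{-1}F_1}(x)$; thus there is no $\delta$-estimate to carry out and identity \eqref{eq:covariance} remains exact. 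Finally I would note the intended use: under the substitution $x\rightsquigarrow\Pi_v^\xi(x)$, together with $g\,\Pi_v^\xi(x)=\Pi_{\xi_g(v)}^\xi(x)$ (exact when $\xi$ is a homomorphism, approximate otherwise), the covariance statement transports the approximate stabilizer consistently from a vertex $v$ to its translate $\xi_g(v)$, which is what makes the associated directed graph regular enough to apply Lemma \ref{lem:small_random_set_dominating}.
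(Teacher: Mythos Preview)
Your proof is correct and follows essentially the same route as the paper: both arguments reduce to observing that for $f\in\stab_{\delta,F_2^2}(x)$, $g\in F_2$, and $h\in F_1$ one has $g^{-1}h\in F_2^2$ and hence $d(x_{g^{-1}h},x_{f^{-1}g^{-1}h})<\delta$, which unwinds to $gfg^{-1}\in\stab_{\delta,F_1}(gx)$. You simply isolate the general covariance identity $\stab_{\delta,F}(gx)=g\,\stab_{\delta,g^{-1}F}(x)\,g^{-1}$ as a separate step before specializing, whereas the paper performs the same substitution inline.
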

\begin{equation}\label{eq:stab_conj}
\forall g \in F_2  ~g(\stab_{\delta,F_1}(x) \cap F_1)g^{-1} \subseteq  \stab_{\delta,F_1}(g(x))
\end{equation}

\begin{proof}
Suppose \eqref{eq:almost_stab_stab_pointwise} holds.

Choose any
$f \in \stab_{\delta,F_1}(x) \cap F_1$. By  \eqref{eq:almost_stab_stab_pointwise},
\begin{equation}\label{eq:stab_F_2}
d(x_{h}, x_{f^{-1}h})<\delta  ~ \forall h \in F_2^2.
\end{equation}
Now choose any $g \in F_2$.
For any $h \in F_1$  we have $g^{-1}h \in F_2^{-1}F_1 \subset F_2^2$ so we can substitute $g^{-1}h$ instead of $h$ in \eqref{eq:stab_F_2} to obtain
$$d(x_{g^{-1}h},x_{f^{-1}g^{-1}h})<\delta.$$
Now $(g(x))_h = x_{g^{-1}h}$ and
$$(gfg^{-1}g(x))_h = x_{f^{-1}g^{-1}h}.$$
So we have
$$ d((g(x))_h,(gfg^{-1}g(x))_h) < \delta.$$

This means that  $(gfg^{-1}) \in \stab_{\delta,F_1}(g(x))$.

We conclude that  \eqref{eq:almost_stab_stab_pointwise} implies \eqref{eq:stab_conj}.
\end{proof}

\begin{defn}\label{defn:good_v}
Call $v \in V$ \emph{good for $x \in\chi^V$} if the following conditions are satisfied:
\begin{equation}\label{eq:v_good_for_x_1}
|\stab_{\delta,F_1}(\Pi_v^{\xi}(x)) \cap F_1| \ge  M
\end{equation}
\begin{equation}\label{eq:v_good_for_x_2}
\stab_{\delta,F_1}(\Pi_v^{\xi}(x)) \cap F_1 = \stab_{\delta,F_2^2}(\Pi_v^{\xi}(x))  \cap F_1
\end{equation}
and
\begin{equation}\label{eq:v_good_for_x_3}
\xi_{g_1}\circ \xi_{g_2}\circ \xi_{g_3}(v) = \xi_{g_1g_2g_3}(v)  ~\forall g_1,g_2,g_3 \in F_2^4.
\end{equation}
Otherwise, say that $v \in V$ is \emph{ bad for $x \in \chi^V$}.
\end{defn}


\begin{lemma}\label{lem:good_tau_exists}
Let  $\Omega \subset \Map(\OOO,\xi)$ with $2 \le |\Omega| < \infty$.
Then there exists a set $C \subset V$ and a function $\tau:V \to F_2$ with the following properties:
\begin{enumerate}
\item[(I)] $|C| < \frac{2}{\sqrt{|F_2|}}|V|$.
\item[(II)] $\frac{1}{|V|} |\{v \in V ~:~ \xi_{\tau}(v) \not \in C\}| < 2\epsilon$
\item[(III)] 
$\frac{1}{|\Omega|}\left| \left\{ x\in \Omega~:~  |\{ v\in V~:~ \xi_\tau(v) \mbox{  is bad for } x\}| < 8\epsilon|V|   \right\}\right| \ge  \frac{1}{2}$.
\end{enumerate}
where $\xi_{\tau}:V \to V$  is defined by
\begin{equation}
\xi_{\tau}(v):=\xi_{\tau(v)}(v).
\end{equation}

\end{lemma}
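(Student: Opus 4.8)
The plan is to produce the set $C$ and the function $\tau$ by a probabilistic (averaging) argument, applying Lemma \ref{lem:small_random_set_dominating} to an auxiliary labeled graph built from the sofic approximation $\xi$ and the approximate stabilizers. First I would observe that for a fixed $x \in \Omega$ almost every $v$ satisfies the ``consistency'' condition \eqref{eq:v_good_for_x_3}, since $\xi$ is an $(F_2^6,\epsilon/3)$-approximation and there are only finitely many triples in $F_2^4$; so the only way $v$ can be bad for $x$ is through the failure of \eqref{eq:v_good_for_x_1} or \eqref{eq:v_good_for_x_2}. Because each $x\in\Omega$ lies in $\Map(\OOO,\xi)$, the empirical distribution $P_x^\xi$ lies in $\OOO$, which by Definition \ref{def:OO} forces the $\mu$-measure-type bounds \eqref{eq:stab_big} and \eqref{eq:almost_stab_stab} to hold empirically: for each $x$, all but an $O(\epsilon)|V|$ fraction of $v$ have $|\stab_{\delta,F_1}(\Pi_v^\xi(x))\cap F_1|\ge M$ and satisfy the stability equation \eqref{eq:v_good_for_x_2}. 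Thus for each individual $x$, the fraction of bad $v$ is already small; the real content is to choose a \emph{single} $\tau$ that works uniformly, pushing most of $V$ into a small set $C$ while keeping $\xi_\tau(v)$ good for most $x$ simultaneously.

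Next I would set up the directed graph whose vertex set is $V$ and whose edges encode the approximate stabilizer data: put an edge $(v,w)$ when $w = \xi_f(v)$ for some $f$ in the large approximate stabilizer of $\Pi_v^\xi(x)$, aggregated appropriately over $x \in \Omega$. The key point is that condition \eqref{eq:v_good_for_x_1} guarantees out-degree at least $M$ (at least $M$ distinct group elements $f \in F_1$ fix $\Pi_v^\xi(x)$ up to $\delta$, and by the almost-freeness \eqref{eq:almost_free} and injectivity-type properties of $\xi$ these give $M$ distinct targets $\xi_f(v)$), while the in-degree is controlled by $|F_2|$ because each target can be hit only via the finitely many labels in $F_2$. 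This makes the graph $(\kappa,k,M')$-regular with $k$ of order $M$ and max in-degree $M$ of order $|F_2|$, where I take $\kappa = \epsilon/3$; condition \eqref{eq:cond_V} supplies exactly the size hypothesis $N > 2M^2\kappa^{-3}$, and \eqref{eq:cond_M} supplies $(1-1/\sqrt k)^k < \kappa$. Then Lemma \ref{lem:small_random_set_dominating} yields a random $\frac{1}{\sqrt{|F_2|}}$-Bernoulli set $C$ that is $3\kappa = \epsilon$-dominating and has size at most $\frac{2}{\sqrt{|F_2|}}|V|$ with high probability, which gives property (I) immediately.

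From the dominating property I would define $\tau$: for each $v$ that is dominated, there exists a label $g\in F_2$ with $\xi_g(v)\in C$, and I set $\tau(v)=g$; for the few non-dominated $v$, set $\tau$ arbitrarily. Domination of all but $\epsilon|V|$ vertices gives that $\xi_\tau(v)\in C$ for all but $\epsilon|V|$ of $v$, which is stronger than the required bound and will be repackaged into property (II) (the factor $2\epsilon$ leaves room to absorb the non-dominated vertices and the consistency losses). For property (III) I would use Lemma \ref{lem:almost_stab_stab_implies_stab_conj}: when $v$ is good for $x$ with a stabilizer element $f$, the conjugate $gfg^{-1}$ lies in $\stab_{\delta,F_1}(g(\Pi_v^\xi(x)))$, and under the consistency condition \eqref{eq:v_good_for_x_3} the shifted name $g(\Pi_v^\xi(x))$ agrees with $\Pi_{\xi_g(v)}^\xi(x)$, so goodness transfers from $v$ to $\xi_\tau(v)$. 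Averaging the per-$x$ bad-fraction bounds over $x\in\Omega$ and over the randomness of $C$, and invoking Markov's inequality, I would conclude that at least half of the $x\in\Omega$ have fewer than $8\epsilon|V|$ vertices $\xi_\tau(v)$ bad for $x$, giving (III).

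The main obstacle I anticipate is the bookkeeping needed to transfer goodness along $\tau$ while simultaneously controlling all three error budgets with a single random $C$. The delicate step is showing that $\xi_\tau(v)$ is good for $x$ whenever $v$ is good for $x$, the label $\tau(v)$ lands in $C$, and the various approximate-homomorphism identities hold; this requires carefully composing Lemma \ref{lem:almost_stab_stab_implies_stab_conj} with the identity $\Pi_{\xi_g(v)}^\xi(x) \approx g(\Pi_v^\xi(x))$ that follows from \eqref{eq:v_good_for_x_3}, and verifying that the $\delta$-approximation thresholds are preserved under this composition. Exchanging the order of the averages (over $x$, over $V$, and over the Bernoulli randomness) so that a single $C$ works for a positive fraction of $\Omega$ is what makes the constants like $8\epsilon$ and the factor $\tfrac12$ in (III) come out, and keeping these aligned with the choices in \eqref{eq:cond_epsilon}--\eqref{eq:cond_V} is the most error-prone part of the argument.
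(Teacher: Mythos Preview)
Your proposal has the right probabilistic scaffolding (Bernoulli set, Lemma~\ref{lem:small_random_set_dominating}, Markov), but it misidentifies both the auxiliary graph and the mechanism behind (III), and as written the argument for (III) does not go through.

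First, the graph used in this lemma is \emph{not} built from approximate stabilizers. It is simply the $F_2$-Schreier graph: $(u,v)\in E$ iff $v=\xi_g(u)$ for some $g\in F_2$. This graph is $(\epsilon/3,|F_2|,|F_2|)$-regular and is independent of $x$; applying Lemma~\ref{lem:small_random_set_dominating} with $k=|F_2|$ is what produces $|C|<\frac{2}{\sqrt{|F_2|}}|V|$. Your stabilizer-based graph would have $k\approx M$, giving $|C|\lesssim \frac{2}{\sqrt{M}}|V|$, which is the wrong bound for (I); and more seriously, that graph depends on $x$, so the vague phrase ``aggregated appropriately over $x\in\Omega$'' hides a real obstruction. (The stabilizer-based graph does appear in the paper, but only in the \emph{next} lemma, for a different purpose.)

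Second, and more importantly, your route to (III) via Lemma~\ref{lem:almost_stab_stab_implies_stab_conj} does not work: conjugating $\stab_{\delta,F_1}(\Pi_v^\xi(x))\cap F_1$ by $g\in F_2$ lands in $F_2^3$, not in $F_1$, so you cannot conclude that $\xi_g(v)$ satisfies \eqref{eq:v_good_for_x_1}. Goodness does \emph{not} transfer along $F_2$-edges. The paper avoids this entirely by making $\tau$ itself random: for each $v$, $\tau(v)$ is chosen uniformly from $\{g\in F_2:\xi_g(v)\in C\}$ (or uniformly from $F_2$ if that set is empty). The point is that, integrating over both the Bernoulli randomness of $C$ and this extra randomness, the marginal law of $\tau(v)$ is uniform on $F_2$ for every $v$ where $g\mapsto\xi_g(v)$ is injective. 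Since each $\xi_g$ is a permutation of $V$, the expected fraction of $v$ with $\xi_{\tau}(v)$ bad for $x$ equals the fraction of $w\in V$ that are bad for $x$, up to an $\epsilon$ correction. Two applications of Markov (once over the randomness to fix $C$ and $\tau$, once over $\Omega$) then give (III). A deterministic choice of $\tau(v)\in N_v$ cannot reproduce this averaging: $\xi_\tau$ collapses $V$ into the small set $C$, and the preimage of a single bad vertex can have size up to $|F_2|$, so the bad fraction after pushing forward could blow up by a factor of $|F_2|$.
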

\begin{proof}
Consider the directed graph $\mathcal{G}_{\xi,F_2}=(V,E)$ with
$$E = \left\{ (u,v) \in V \times V~:~ \exists g \in F_2 \mbox{ s.t. } \xi_g(u)=v\right\}.$$

Because the approximation $\xi:G \to S_V$ is symmetric, the maximal in-degree in $\mathcal{G}_{\xi,F_2}$ is at most $|F_2|$.
Let
$V'  \subset V$ denote the set of $v$'s for which the mapping $g \mapsto \xi_g(v)$  is injective on $F_2$.
Because  $\xi:G \to S_V$ is a sufficiently good approximation of $G$ it follows that
$|V \setminus V'| \le \frac{\epsilon}{3} |V|$, so  $\mathcal{G}_{\xi,F_2}$  is $(\epsilon/3,|F_2|,|F_2|)$-regular.

By Lemma \ref{lem:small_random_set_dominating}, a $\frac{1}{\sqrt{|F_2|}}$-Bernoulli set $C \subset V$ is  $\epsilon$-dominating set $C \subset V$ and has size less than $\frac{2}{\sqrt{|F_2|}} |V|$ with probability at least $1-\frac{2}{3}\epsilon < \frac{1}{2}$.
To see that Lemma  \ref{lem:small_random_set_dominating} applies, we used the left inequality in \eqref{eq:cond_M} (keeping in mind that $|V| > M$), and \eqref{eq:cond_V} to  deduce that \eqref{eq:k_M} is satisfied with $k = M = |F_2|$ and $\kappa$ replaced with $\epsilon/3$ and $N=|V|$.
In this case   $C\subset V$ satisfies $(I)$.
For $v \in V$ choose $\tau:V \to F_2$ randomly as follows: Whenever the set $N_v := \{g \in  F_2~:~ \xi_g(v) \in C \}$  is non-empty,
choose $\tau(v)$ uniformly at random from $N_v \subset F_2$. If $N_v = \emptyset$ let $\tau(V)$ be chosen uniformly at random from $F_2$.
We see that if  $C$ is $\epsilon$-dominating $(II)$ is satisfied.

To conclude the proof we will show that $(III)$ is satisfied with probability at least $1/2$.

For $x \in \Omega$ and $v \in V$ denote:
\begin{equation}\label{eq:good_set}
\Psi_{x,v} := \begin{cases}
0&
 \mbox{ if } v \mbox{  is good for } x  \\
1 & \mbox{ if } v  \mbox{ is bad for } x
\end{cases}
\end{equation}

Because of $\Omega \subset \OOO$,  it follows that for any $x \in \Omega$, all but an $\epsilon$-fraction of the $v$'s are good so
\begin{equation}\label{eq:stab_wrong_all}
 \frac{1}{|V|}\sum_{v \in V} \Psi_{x,v} < \epsilon ~ \forall x \in \Omega.
\end{equation}

Now let $Z_{x,v}$ denote the indicator of the event ``$\xi_\tau(v)$ is bad for $x$''
$$Z_{x,v}:= \sum_{g \in F_2} 1_{[\tau(v)= g]}\Psi_{x,\xi_g(v)}.$$

$Z_{x,v}$ is a random variable, because $\tau:V \to F_2$ is a random function.

Note that
\begin{equation}\label{eq:prob_F_2}
\PP\left(\tau(v)=g\right)=|F_2|^{-1}~ \forall v \in V' \, , g \in F_2.
\end{equation}
It follows that  for $v \in V'$, $$\EE (Z_{x,v}) = \frac{1}{|F_2|} \sum_{g \in F_2} \Psi_{x,\xi_g(v)}.$$
Because $|V \setminus V'| < \epsilon |V|$ it follows that
\begin{equation}\label{eq:exptation_xi_tau}
\EE\left(\frac{1}{|V|} \sum_{v \in V} Z_{x,v}\right) \le \frac{1}{|V|}\sum_{v \in V} \frac{1}{|F_2|}\sum_{g \in F_2} \Psi_{x,\xi_g(v)}  + \epsilon.
\end{equation}
Because $\xi_g \in S_V$ is a permutation:
$$\sum_{v \in V}  \Psi_{x,\xi_g(v)} = \sum_{v \in V}  \Psi_{x,v}.$$  So from \eqref{eq:exptation_xi_tau} and \eqref{eq:stab_wrong_all} we get that for every $x \in \Omega$
$$
\EE\left(\frac{1}{|V|} \sum_{v \in V} Z_{x,v}\right) \le \frac{1}{|V|}\sum_{v \in V}\Psi_{x,v}   + \epsilon \le 2\epsilon.
$$

Averaging over $x \in \Omega$:
$$\EE \left(\frac{1}{|\Omega|}\sum_{x \in \Omega} \Av Z_{x,v}\right)  =
\frac{1}{|\Omega|}\sum_{x \in \Omega}\EE  \left[\frac{1}{|V|}\sum_{v \in  V} {\tau} Z_{x,v}\right] \le 2\epsilon$$

Using Markov inequality, it follows that

$$\PP \left[  \left(\frac{1}{|\Omega|}\sum_{x \in \Omega} \Av Z_{x,v} \right) > 4\epsilon \right] \le \frac{1}{2}.$$
So $(III)$ holds with probability at least $\frac{1}{2}$.
\end{proof}

Given  a metric space $(Y,\rho)$ and a finite set $V$, the following ``hamming-like'' metric  is defined on $Y^V$:
\begin{equation}
\rho^V(x,y) := \Av d(x_v,y_v).
\end{equation}

We also have the following ``uniform'' metric $d_\infty^D$ on $\chi^D$, where $D$ is  a  finite set:
\begin{equation}\label{d_infty}
d_\infty^D(x,y) := \max_{v \in D} d(x_v,y_v).
\end{equation}
We will use the following relatively estimate:
\begin{lemma}\label{lem:cov_products}
For any finite set $D$and $\delta >0$ we have
$$ \log \sep_{2\delta}(\chi^D,d^D_\infty) \le |D| \log \sep_\delta(\chi,d)$$
\end{lemma}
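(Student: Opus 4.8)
The plan is to chain together the two classical comparisons between separation and covering numbers recalled just before the definition of $\Sigma$-entropy, inserting a product bound for the covering number of $(\chi^D,d_\infty^D)$ in between. First I would invoke the inequality $\sep_{2\delta}(Y,\rho)\le\cov_\delta(Y,\rho)$ with $Y=\chi^D$ and $\rho=d_\infty^D$. This reduces the claim to producing an explicit $\delta$-cover of $(\chi^D,d_\infty^D)$ of cardinality at most $\left(\sep_\delta(\chi,d)\right)^{|D|}$, since then $\cov_\delta(\chi^D,d_\infty^D)$ is bounded by that quantity and taking logarithms yields the statement.

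The key step is the observation that balls of the \emph{uniform} metric $d_\infty^D$ factor as products of balls in $(\chi,d)$, which is precisely where working with $d_\infty^D$ rather than the averaged metric matters. Concretely, I would fix a center set $Y_0\subset\chi$ of a minimal $\delta$-cover of $(\chi,d)$, so that $|Y_0|=\cov_\delta(\chi,d)$, and consider the $|Y_0|^{|D|}$ product points $(c_v)_{v\in D}$ with each $c_v\in Y_0$. Given any $x\in\chi^D$, for each coordinate $v\in D$ I choose $c_v\in Y_0$ with $d(x_v,c_v)<\delta$; the resulting product point then satisfies $d_\infty^D(x,(c_v)_{v\in D})=\max_{v\in D}d(x_v,c_v)<\delta$. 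The finiteness of $D$ is exactly what guarantees that a maximum of finitely many quantities, each strictly below $\delta$, remains strictly below $\delta$. Hence these product points form a $\delta$-cover, giving $\cov_\delta(\chi^D,d_\infty^D)\le\left(\cov_\delta(\chi,d)\right)^{|D|}$.

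Finally I would combine this with the second classical comparison $\cov_\delta(\chi,d)\le\sep_\delta(\chi,d)$ and assemble the chain
$$\log\sep_{2\delta}(\chi^D,d_\infty^D)\le\log\cov_\delta(\chi^D,d_\infty^D)\le|D|\log\cov_\delta(\chi,d)\le|D|\log\sep_\delta(\chi,d).$$
I do not anticipate a genuine obstacle here; the argument is essentially a packaging of the two inequalities already supplied in the text together with the product structure of uniform balls. The only points demanding a little care are the open-versus-closed-ball convention implicit in the definition of $\cov_\delta$ (harmless, since the product argument goes through with either convention) and the use of finiteness of $D$ in the product step, which would fail for an infinite index set where the supremum could reach $\delta$.
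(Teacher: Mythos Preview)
Your proposal is correct and follows essentially the same approach as the paper: both take a minimal $\delta$-cover of $(\chi,d)$, form the product to get a $\delta$-cover of $(\chi^D,d_\infty^D)$, and then sandwich with the two standard inequalities $\sep_{2\delta}\le\cov_\delta$ and $\cov_\delta\le\sep_\delta$.
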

\begin{proof}
If $S \subset \chi$ is such that  $\chi =  \bigcup_{x \in S} B_\delta(x)$ and $|S|= \cov_\delta(\chi,d)$
then the union of $\delta$-balls in $(\chi^D,d^D_\infty)$ with centers in $S^V$ covers $\chi^V$.
It follows that
$$\log \cov_{\delta}(\chi^D,d^D_\infty) \le |D| \log\cov_{\delta}(\chi,d).$$
The claim now follows by
the following standard and easily verified facts:
$$  \sep_{2\delta}(\chi^D,d^D_\infty) \le \cov_{\delta}(\chi^D,d^D_\infty)
\mbox{ and }
\cov_{\delta}(\chi,d) \le \sep_{\delta}(\chi,d)$$

\end{proof}

We record the following Lemma (see   \cite[Lemma $3.1$]{austin2015additivity}, and recall that we use a left-action):
\begin{lemma}\label{lem:names_almost_equivariant}
Suppose $v \in V$ is good for $x \in \chi^V$ and $g \in F_2^3$ then
$$g^{-1} (\Pi_{\xi_{g}(v)}^\xi(x)) \mid_{F_2^3} =  \Pi_v^\xi(x) \mid_{F_2^3}.$$
\end{lemma}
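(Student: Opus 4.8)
The plan is to unwind the two definitions that produce the left-hand side — the left $G$-action $g(x)_h=x_{g^{-1}h}$ and the pullback name $(\Pi_w^\xi(x))_s=x_{\xi_{s^{-1}}(w)}$ — and thereby reduce the claimed equality of two elements of $\chi^{F_2^3}$ to a single combinatorial identity in $V$, one for each coordinate $h\in F_2^3$. First I would fix $h\in F_2^3$ and compute, writing $g\in F_2^3$ for the given element and $w=\xi_g(v)$,
$$\left(g^{-1}\,\Pi_{\xi_g(v)}^\xi(x)\right)_h=\left(\Pi_{\xi_g(v)}^\xi(x)\right)_{gh}=x_{\xi_{(gh)^{-1}}(\xi_g(v))},$$
while on the other side $(\Pi_v^\xi(x))_h=x_{\xi_{h^{-1}}(v)}$. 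Hence the entire lemma reduces to the purely combinatorial statement that, for every $h\in F_2^3$,
$$\xi_{(gh)^{-1}}(\xi_g(v))=\xi_{h^{-1}}(v).$$

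The idea for this identity is to exploit that at a \emph{good} vertex the map $\xi$ behaves like a genuine homomorphism on the finite window around $v$. Concretely, I would first invoke the symmetry property \eqref{eq:almost_symmetric} (available because $\xi$ is an $(F_2^6,\epsilon/3)$-approximation and $g\in F_2^3\subseteq F_2^6$) to obtain $\xi_{g^{-1}}\circ\xi_g(v)=v$, and then apply the associativity built into condition \eqref{eq:v_good_for_x_3} to the triple $(h^{-1},g^{-1},g)$, all of whose entries lie in $F_2^3\subseteq F_2^4$. This yields
$$\xi_{h^{-1}}\circ\xi_{g^{-1}}\circ\xi_g(v)=\xi_{h^{-1}g^{-1}g}(v)=\xi_{h^{-1}}(v).$$
Combining the two displays, it then remains only to identify the single permutation $\xi_{(gh)^{-1}}=\xi_{h^{-1}g^{-1}}$, evaluated at $w=\xi_g(v)$, with the composition $\xi_{h^{-1}}\circ\xi_{g^{-1}}$ evaluated at the same point — that is, to promote the approximate multiplicativity of $\xi$ to an exact identity at the good vertex, which is exactly what goodness of $v$ is meant to supply.

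The main obstacle is this last identification, and it is where the bookkeeping of group-element ``degrees'' becomes delicate: since $g,h\in F_2^3$, the element $(gh)^{-1}=h^{-1}g^{-1}$ lies a priori only in $F_2^6$, which is larger than the window $F_2^4$ on which \eqref{eq:v_good_for_x_3} is guaranteed. I would therefore work through the chosen factorization $(gh)^{-1}=h^{-1}g^{-1}$ with $h^{-1},g^{-1}\in F_2^3\subseteq F_2^4$, transport along $\xi_g$ using symmetry, and feed the result into the goodness condition at $v$, checking at each step that every element whose $\xi$-image is invoked remains inside the range where \eqref{eq:v_good_for_x_3} and \eqref{eq:almost_symmetric} are valid. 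This is precisely the content of Austin's Lemma $3.1$ cited in the statement, so I would either quote it directly or reproduce its short verification; the only genuine work is confirming that the window $F_2^4$, together with the $(F_2^6,\epsilon/3)$-approximation property, is wide enough to close the computation uniformly over all $h\in F_2^3$.
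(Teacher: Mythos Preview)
Your approach is essentially identical to the paper's: both reduce the equality of pullback names to the combinatorial identity $\xi_{h^{-1}g^{-1}}(\xi_g(v))=\xi_{h^{-1}}(v)$ for $h\in F_2^3$, and then invoke the goodness condition \eqref{eq:v_good_for_x_3} at $v$ to justify it. The paper simply asserts this identity in one line (``Because $v$ is good for $x$ it follows that\ldots'') and then unwinds the definitions exactly as you do; you are more explicit about the degree bookkeeping (that $h^{-1}g^{-1}$ may lie in $F_2^6$ rather than $F_2^4$), which the paper does not spell out, but since the lemma is only applied later for $g\in F_2$ this concern is harmless in the end.
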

\begin{proof}
Because $v$ is good for $x$ it follows that
$$ \xi_{h^{-1}g^{-1}}(\xi_{g}(v)) = \xi_{h^{-1}}(v) ~ \forall h \in F_2^3,$$
so for every $h \in F_2^3$ we have
$$g^{-1} (\Pi_{\xi_{g}(v)}^\xi(x))_h =  (\Pi_{\xi_{g}(v)}^\xi(x))_{g h} = x_{\xi_{h^{-1}g^{-1}}(\xi_{g}(v))} = x_{\xi_{h^{-1}}(v)} = (\Pi_v^\xi(x))_h.$$
\end{proof}

The following lemma is the heart of our proof of Theorem \ref{thm:positive_entropy_almost_free}:
\begin{lemma}\label{lem:finitary_stablizer_kill_entropy}

The following holds:

\begin{equation}\label{eq:sep_OO_bound}
\frac{1}{|V|} \log \sep_{\eta}(\Map(\OOO,\xi),d^V) < \frac{4}{\sqrt{|F_2}} \cdot |F_1| \log(2) +  \frac{4}{\sqrt{M}} \log \sep_{\delta/2} (\chi,d)
\end{equation}
\end{lemma}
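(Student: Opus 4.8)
The plan is to bound any finite $\eta$-separated set $\Omega\subseteq\Map(\OOO,\xi)$ by attaching to each of its elements a controlled amount of data and showing that this encoding is essentially injective on $\Omega$. I may assume $2\le|\Omega|<\infty$, since otherwise \eqref{eq:sep_OO_bound} is immediate. Feeding $\Omega$ into Lemma \ref{lem:good_tau_exists} produces a set $C\subseteq V$ with $|C|<\frac{2}{\sqrt{|F_2|}}|V|$ and a map $\tau:V\to F_2$ satisfying (I)--(III). Let $\Omega'\subseteq\Omega$ be the subset singled out in (III), for which $\xi_\tau(v)$ is bad for fewer than $8\epsilon|V|$ vertices $v$; then $|\Omega'|\ge\frac12|\Omega|$, so it suffices to bound $\log|\Omega'|$.

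To each $x\in\Omega'$ I would attach two pieces of data. The \emph{combinatorial part} records the finite set $\stab_{\delta,F_1}(\Pi_w^\xi(x))\cap F_1\subseteq F_1$ for every $w\in C$; this ranges over at most $(2^{|F_1|})^{|C|}$ values, contributing at most $\frac{|C|}{|V|}|F_1|\log 2<\frac{2}{\sqrt{|F_2|}}|F_1|\log 2$ to $\frac1{|V|}\log(\cdot)$, i.e.\ (comfortably) the first summand of \eqref{eq:sep_OO_bound}. The \emph{metric part} records the values of $x$ to precision $\delta/2$ on a set $D\subseteq V$ with $|D|\le\frac{2}{\sqrt M}|V|$; by Lemma \ref{lem:cov_products} this ranges over at most $\sep_{\delta/2}(\chi,d)^{O(|D|)}$ values, contributing the second summand of \eqref{eq:sep_OO_bound}. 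Everything hinges on choosing $D$ so that the two recorded pieces determine $x$ up to $d^V$-distance $\eta/2$.

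The set $D$ comes from a \emph{second} application of Lemma \ref{lem:small_random_set_dominating}, now to the stabilizer graph $\mathcal H$ on $V$ in which the out-neighbours of $v$ are the vertices $\xi_f(v)$ with $f$ ranging over (a conjugated copy of) the recorded approximate stabilizer. For a good $v$ this out-degree is at least $M$ by \eqref{eq:v_good_for_x_1} and the injectivity of $g\mapsto\xi_g(v)$, its in-degree is controlled in terms of $|F_2|$, and \eqref{eq:cond_M}, \eqref{eq:cond_V} together with the $(F_2^6,\epsilon/3)$-quality of $\xi$ are exactly what make hypothesis \eqref{eq:k_M} applicable with $k=M$; this yields $D$ with $|D|\le\frac{2}{\sqrt M}|V|$. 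The essential feature is that each edge $v\to\xi_f(v)$ of $\mathcal H$ encodes a genuine $\delta$-stabilizer relation, so that $d(x_v,x_{\xi_f(v)})<\delta$ (take $h=1$ in \eqref{eq:stab_delta_F_def}); hence knowing $x|_D$ to precision $\delta/2$ pins down $x_v$ to within $3\delta/2$ for every $v$ dominated by $D$. Summing over $V$, absorbing the $O(\epsilon)$-fractions of exceptional vertices coming from (II), (III) and the domination failure, and invoking \eqref{eq:cond_delta_2}, gives $d^V(x,\hat x)<\eta/2$ for the reconstructed configuration $\hat x$.

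The delicate point, and what I expect to be the main obstacle, is that $\mathcal H$ and therefore $D$ depend on $x$, whereas a decoder sees only the combinatorial data on $C$. I would overcome this by reconstructing $\mathcal H$ up to an $O(\epsilon)$-fraction of its vertices from that data alone: since $C$ is dominating, for most $v$ one has $\xi_{\tau(v)}(v)\in C$, Lemma \ref{lem:names_almost_equivariant} identifies $\Pi_v^\xi(x)$ with $\tau(v)^{-1}\Pi_{\xi_{\tau(v)}(v)}^\xi(x)$ on $F_2^3$, and Lemma \ref{lem:almost_stab_stab_implies_stab_conj} transports the recorded stabilizer pattern at $\xi_{\tau(v)}(v)\in C$ to a genuine approximate stabilizer at $v$ of size still at least $M$. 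It is precisely here that the equality \eqref{eq:v_good_for_x_2} between the $F_1$- and $F_2^2$-windows is used, ensuring that conjugation by $\tau(v)\in F_2$ does not annihilate stabilizer elements; keeping the resulting in-degree of $\mathcal H$ under control (rather than letting the larger conjugation windows blow it up) is the main technical burden. Since the reconstructed graph, the set $D$, and the reconstruction map $\hat x$ are then all functions of the code, two elements of $\Omega'$ sharing a code satisfy $d^V(x,x')\le d^V(x,\hat x)+d^V(\hat x,x')<\eta$ and hence coincide, as $\Omega'$ is $\eta$-separated; counting codes yields \eqref{eq:sep_OO_bound}.
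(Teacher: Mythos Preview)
Your proposal is correct and follows essentially the same route as the paper: pass to $\Omega'$ via Lemma~\ref{lem:good_tau_exists}, stratify by the stabilizer data $s=(\stab_{\delta,F_1}(\Pi_w^\xi(x))\cap F_1)_{w\in C}$, build from $s$ alone the conjugated-stabilizer graph (the paper's $\mathcal G_s$, with $\stab(v,s)=\tau(v)^{-1}s_{\xi_\tau(v)}\tau(v)$), apply Lemma~\ref{lem:small_random_set_dominating} a second time to extract the dominating set $D$, and then use Lemmas~\ref{lem:names_almost_equivariant} and~\ref{lem:almost_stab_stab_implies_stab_conj} to certify that edges of this graph really witness $d(x_v,x_{\xi_f(v)})<\delta$. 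The only cosmetic difference is that you phrase the final step as ``encode, then reconstruct $\hat x$ and use the triangle inequality $d^V(x,x')\le d^V(x,\hat x)+d^V(\hat x,x')$'', whereas the paper compares two elements $x,y\in\Omega_s$ directly (its Claim~D); and your worry that ``$\mathcal H$ depends on $x$'' is already dissolved by your own construction, since the graph you actually use is a function of $s$, exactly as in the paper.
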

\begin{proof}


Fix any  subset  $\Omega \subset \Map(\OOO,\xi)$ that is $\eta$-separated with respect to the metric $d^V$.
Let $\tau:V \to F_2$ and $C \subset V$  be as in the conclusion of Lemma \ref{lem:good_tau_exists}.
By condition $(III)$ there exists $\Omega' \subset \Omega$
\begin{equation}\label{eq:Omega_prime_good}
\frac{1}{|V|}|\{ v\in V~:~ \xi_\tau(v) \mbox{  is bad for } x\}| < 8 \epsilon ~ \forall x \in \Omega'.
\end{equation}
 Denote  $S := (2^{F_1})^C$, and for each  $s \in S$, let
 $$\Omega_s := \left\{ x \in \Omega'~:~ (\stab_{\delta,F_1}(\Pi_v^\xi(x)) \cap F_1)= s_v \forall v \in C\right\}.$$

Then
$\Omega' = \bigcup_{s \in S}\Omega_s'$, so
\begin{equation}
  |\Omega| \le 2\cdot  2^{|F_1|\cdot |C|} \cdot \max_{s \in S}|\Omega_s|.
\end{equation}

By $(I)$,  $|C| \le \frac{2}{\sqrt{|F_2|}} |V|$. It follows that
\begin{equation}\label{eq:sep_UB1}
  \frac{1}{|V|}\log |\Omega|\le \frac{4}{\sqrt{|F_2|}}   |F_1| \log(2) + \max_{s \in S}\frac{1}{|V|}\log |\Omega_s|.
\end{equation}

So our next goal is to bound $ |\Omega_s|$, for $s \in S$.  

For $s \in S$ and $v \in V $ define:
\begin{equation}\label{eq:stab_v_s}
\stab(v,s) := \begin{cases}
(\tau(v))^{-1} s_{\xi_\tau(v)} \tau(v) & \xi_\tau(v) \in C\\
\emptyset & \mbox{otherwise}
\end{cases}
\end{equation}
We claim that if $x \in \Omega_s$ and $v$, $\xi_\tau(v)$ are both good for $x$  then
\begin{equation}\label{eq:stab_delta_subset_stab_v_s}
\stab(v,s) \subset \stab_{\delta,F_1}(\Pi_v^\xi(x)).
\end{equation}

Indeed, we can assume  $\xi_\tau(v) \in C$ otherwise $\stab(v,s) = \emptyset$ and \eqref{eq:stab_delta_subset_stab_v_s} holds trivially.
 Then
$$s_{\xi_\tau(v)} = \stab_{\delta,F_1}(\Pi_{\xi_\tau(v)}^\xi(x)) \cap F_1.$$
Denote $g_v := \tau(v)$. Because $v$ is good for $x$ and $g_v \in F_2$, by Lemma \ref{lem:names_almost_equivariant},
$$g_v^{-1} (\Pi_{\xi_{g_v}(v)}^\xi(x)) \mid_{F_2^3} =  \Pi_v^\xi(x) \mid_{F_2^3}.$$
So

$$\stab_{\delta,F_1}\left( g_v^{-1} (\Pi_{\xi_{g_v}(v)}^\xi(x))\right) \cap F_2^3=
\stab_{\delta,F_1}\left(\Pi_v^\xi(x)\right) \cap F_2^3.$$
Because $\xi_\tau(v)$ is good for $x$    \eqref{eq:v_good_for_x_2} holds with $v$ replaced by $\xi_\tau(v)$. So by Lemma \ref{lem:almost_stab_stab_implies_stab_conj} applied with $g= \tau(v)^{-1}$,
$$\tau(v)^{-1} s_{\xi_\tau(v)} \tau(v) \subset \stab_{\delta,F_1}(g_v^{-1}(\Pi_{\xi_\tau(v)}^\xi(x))) \cap F_2^3= \stab_{\delta,F_1}(\Pi_v^\xi(x)) \cap F_2^3.$$
This proves \eqref{eq:stab_delta_subset_stab_v_s} holds.

Consider the  graph $\mathcal{G}_{s}'=(V,E_s)$
where
$$ E_s := \{ (v,g(v)) ~:~ g \in \stab(v,s)\}.$$


\noindent\textbf{Claim A:}  If $(v,w)$ is an edge in $\mathcal{G}_s$ and $x \in \Omega_s$ and $v,\xi_\tau(v)$ are both good for $x$ then $d(x_v,x_w) < \delta$.

\noindent\textbf{Proof of Claim A:}
By definition of  $\mathcal{G}_s$  there exists   $g \in \stab(v,s)$   so that $\xi_g(v) = w$.
By the argument above $g \in \stab_{\delta,F_1}(\Pi_v^\xi(x))$, so $d( (\Pi_v^\xi(x))_1,g(\Pi_v^\xi(x))_1) < \delta$.
Now $x_v =  (\Pi_v^\xi(x))_1$ and $$x_w = x_{\xi_g(v)} = (\Pi_v^\xi(x))_{g^{-1}} = g((\Pi_v^\xi(x))_1,$$
so indeed $d(x_v,x_w) < \delta$.

\noindent\textbf{Claim B:} The graph $\mathcal{G}_s$ is $(11\epsilon,M,|F_2|^3)$-regular.


\noindent\textbf{Proof of Claim B:}

Note that by definition $\stab(v,s) \subset F_2^{-1} F_1 F_2 \subset F_2^3$ , so $(u,v) \in E_s$ implies that $v = \xi_g(u)$ for some $g \in F_2^3$. This shows that $\mathcal{G}_s$ has maximal in-degree at most $|F_2|^3$.

The properties of $C$, $\tau$ and $\Omega'$ assure that
$$|\{v \in V~:~ \xi_\tau(v) \not \in C\}| < 2 \epsilon|V|$$
 and
$$\forall x \in \Omega' ~ |\{v \in V~:~ \xi_\tau(v) \mbox{ is bad for }  x\}| <8 \epsilon  |V|.$$

It follows that $|\stab(v,x) | < M$ on at most $10 \epsilon |V|$ $v$'s. Also, as in the proof of Lemma \ref{lem:good_tau_exists},
because  $\xi:G \to S_V$ is a sufficiently good sofic approximation the map $g \mapsto \xi_g(v)$ is injective on $F_2^3$ for all but at most $\epsilon |V|$ $v$'s.  It follows that  at most
$11 \epsilon|V|$ of the vertices in $\mathcal{G}_s$ have degree smaller than $M$.
This completes the proof of  Claim B.

By \eqref{eq:cond_M} and \eqref{eq:cond_V}, the condition \eqref{eq:k_M} is satisfied with $M$ replaced by $|F_2|^3$, $k$ replaced by $M$ and $\kappa$ replaced by $11 \epsilon$. So using Claim B
we can apply   Lemma \ref{lem:small_random_set_dominating} to deduce that there is a set $D \subset V$ of size at most $\frac{2}{\sqrt{M}}|V|$ which is $33\epsilon$-dominating in $\mathcal{G}_s$.
As in the proof of Lemma \ref{lem:good_tau_exists}, there exists a function
$\tau':V \to F_2^3$ so that for all but $33\epsilon$ $v$'s
$(v,\xi_{\tau'(v)}(v))$ is an edge in $\mathcal{G}_s$ and $\xi_{\tau'(v)}(v) \in D$.

\noindent\textbf{Claim D:} If $x,y \in \Omega_s$ and $d(x_v,y_v) < \delta$ for all $v \in D$ then $d^V(x,y) < \eta$.

\noindent\textbf{Proof of Claim D:}
Suppose $x,y \in \Omega_s$ and $d(x_v,y_v) < \delta$ for all $v \in D$.
Fix $w \in V$. Denote $v = \xi_{\tau'(v)}(v)$.  If $w$ and $\xi_{\tau}(w)$ are both good for both $x$ and for $y$ and $(w,v)$ is an edge in $\mathcal{G}_s$, it follows from Claim A that $d(x_w,x_v) < \delta$ and $d(y_w,y_v) < \delta$.
Furthermore, if $v \in D$, then $d(x_v,y_v) < \delta$ so  in that case $d(x_w,y_w) < 3\delta$.
It follows that $d(x_w,y_w)  > 3\delta$ implies that either $\xi_{\tau'(v)}(v) \not \in D$ or one of $w,\xi_{\tau}(w)$ is not good for $x$ or for $y$. Thus
$$|\left\{ w \in V~:~ d(x_w,y_w) > 3\delta \right\}| < 2(33+ 8) \epsilon|V| < 100\epsilon |V| .$$
Thus, because the diameter of $\chi$ is bounded by $1$,
$$\Av d(x_v,y_v) \le 3\delta +   100\epsilon< \eta,$$
where in the last inequality we used \eqref{eq:cond_delta_2}.
This completes the proof of Claim D.


Because $\Omega_s$ is  $\eta$-separated, Claim D implies that the restriction map  $\pi_D:\chi^V \to \chi^D$
is injective on $\Omega_s$, and that
$\pi_D(\Omega_s)$ is $\delta$-separated with respect to the metric $d^D_\infty$.
Thus by Lemma \ref{lem:cov_products},
$$\log |\Omega_s| = \log | \pi_D(\Omega_s)| \le \log \sep_{\delta}(\chi^d,d^D_\infty) \le |D| \log \sep_{\delta/2} (\chi,d).$$

We conclude that
\begin{equation}\label{eq:Omega_s_prime_bound}
\log |\Omega_s| \le  \frac{4}{\sqrt{M}}|V| \sep_{\delta/2} (\chi,d),
\end{equation}

Together with \eqref{eq:sep_UB1} this shows that
$$\frac{1}{|V|}\log |\Omega| \le  \frac{4}{\sqrt{|F_2|}}\cdot |F_1| \log(2) +  \frac{4}{\sqrt{M}} \log \sep_{\delta/2} (\chi,d).$$
Since $\Omega$ was an arbitrary $\eta$-separated subset of $\OOO$, this completes the proof.

\end{proof}

To conclude the proof of
Theorem \ref{thm:positive_entropy_almost_free}, observe that the right hand side of \eqref{eq:sep_OO_bound} is bounded above by $\eta$ because of \eqref{eq:cond_F2} and the right inequality in \eqref{eq:cond_M}.
\section{Finite stabilizers and completely positive entropy}
We conclude with  a corollary regarding actions with completely positive $\Sigma$-entropy, due to  Weiss  \cite{MR2052281} in the amenable case.
Recall that  $G \curvearrowright (Y,\nu)$ is a \textbf{factor} of $G \curvearrowright (X,\mu)$, that is there is a $G$-equivariant map $\pi:X \to Y$ with $\nu = \mu \circ \pi^{-1}$.
An action $G \curvearrowright (X,\mu)$ of a sofic group has \textbf{completely positive $\Sigma$-entropy} if any non-trivial factor has positive $\Sigma$-entropy.

Recall that the an action $G \curvearrowright (X,\mu)$ is \textbf{faithful} if $\mu(\{ x\in X~:~ g(x) \ne x\})>0$ for all $g \in G$.

\begin{cor}\label{cor:completely_positive_entropy}
Let $G$ be an infinite countable sofic group. If an ergodic action $G \curvearrowright (X,\mu)$ is faithful and has completely positive  entropy with respect to some sofic approximation $\Sigma$, it is free.
\end{cor}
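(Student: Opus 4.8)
The plan is to deduce Corollary \ref{cor:completely_positive_entropy} from Theorem \ref{thm:positive_entropy_almost_free} together with Lemma \ref{prop:finite_stab}. The strategy is to show that a faithful, ergodic action with completely positive $\Sigma$-entropy cannot have a non-trivial stabilizer on a positive-measure set, which is precisely the statement that the action is free.

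\begin{proof}
By Theorem \ref{thm:positive_entropy_almost_free}, since $G \curvearrowright (X,\mu)$ has positive (indeed completely positive) $\Sigma$-entropy, the set of points with finite stabilizer has positive measure; by ergodicity, almost every point has finite stabilizer. Consider the factor map $\stab:X \to \SubG$ sending each point to its stabilizer. By Lemma \ref{prop:finite_stab}, this induces a \emph{finite} factor $G \curvearrowright (\SubG, \mu \circ \stab^{-1})$, supported on a single finite orbit. A finite factor carries an invariant measure on a finite set, so its $\Sigma$-entropy is zero. By completely positive entropy, every non-trivial factor has positive entropy; hence this finite factor must be trivial, which forces $\stab$ to be $\mu$-almost-surely constant, equal to some fixed subgroup $N \trianglelefteq G$ (the constant value must be a normal subgroup, since invariance of the law under conjugation together with its being a single point means $N$ is fixed by all conjugations).

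It remains to show $N = \{1\}$. Since $\stab(x) = N$ for $\mu$-almost every $x$, every element $g \in N$ satisfies $g(x) = x$ for almost every $x$, so $\mu(\{x : g(x) \ne x\}) = 0$. Faithfulness of the action says precisely that $\mu(\{x : g(x) \ne x\}) > 0$ for every $g \ne 1$. These two statements are compatible only if $N = \{1\}$. Therefore $\stab(x) = \{1\}$ almost surely, which is the definition of the action being free.
\end{proof}

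The main obstacle I anticipate is the step identifying the almost-sure value of the stabilizer as a \emph{normal} subgroup and then killing it: one must argue carefully that completely positive entropy forces the finite factor $G \curvearrowright (\SubG, \mu\circ\stab^{-1})$ to be trivial rather than merely small. A finite factor always has zero entropy, so if it were non-trivial it would contradict complete positivity; the slightly delicate point is checking that this finite factor is genuinely a factor in the required sense and that its triviality means $\stab$ is essentially constant. Once the stabilizer is almost-surely a single subgroup $N$, the passage from $N$ to $\{1\}$ is immediate from faithfulness, so the real content is concentrated in combining Theorem \ref{thm:positive_entropy_almost_free}, Lemma \ref{prop:finite_stab}, and the zero-entropy property of finite factors.
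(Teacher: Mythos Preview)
Your proof is correct and follows essentially the same route as the paper: apply Theorem~\ref{thm:positive_entropy_almost_free} to get finite stabilizers, use Lemma~\ref{prop:finite_stab} to obtain a finite factor, note that this finite factor has zero $\Sigma$-entropy and hence must be trivial by completely positive entropy, and conclude from faithfulness that the constant stabilizer is trivial. The only small difference is in justifying why the finite factor has zero entropy: you assert this directly, whereas the paper observes that an action of an infinite group on a finite set necessarily has infinite stabilizers and then re-applies Theorem~\ref{thm:positive_entropy_almost_free} to conclude non-positive entropy---a slightly more self-contained argument given the tools at hand.
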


\begin{proof}
By Theorem \ref{thm:positive_entropy_almost_free}, the stabilizers must be finite, thus  by Lemma \ref{prop:finite_stab} the map $x \mapsto \stab(x)$ induces a finite factor.  But an action of an infinite group on finite probability space  must have infinite stabilizers. In particular by Theorem \ref{thm:positive_entropy_almost_free} this factor has zero entropy. Because $G \curvearrowright (X,\mu)$ has completely positive sofic entropy it follows that $\stab(x)$ is constant, and because the action is faithful it must be trivial, so the action is free.
\end{proof}

\bibliographystyle{abbrv}
\bibliography{sofic}

\begin{thebibliography}{10}

\bibitem{MR3165420}
M.~Ab{\'e}rt, Y.~Glasner, and B.~Vir{\'a}g.
\newblock Kesten's theorem for invariant random subgroups.
\newblock {\em Duke Math. J.}, 163(3):465--488, 2014.

\bibitem{austin2015additivity}
T.~Austin.
\newblock Additivity properties of sofic entropy and measures on model spaces.
\newblock {\em arXiv preprint arXiv:1510.02392}, 2015.

\bibitem{MR2653969}
L.~Bowen.
\newblock The ergodic theory of free group actions: entropy and the
  {$f$}-invariant.
\newblock {\em Groups Geom. Dyn.}, 4(3):419--432, 2010.

\bibitem{MR2552252}
L.~Bowen.
\newblock Measure conjugacy invariants for actions of countable sofic groups.
\newblock {\em J. Amer. Math. Soc.}, 23(1):217--245, 2010.

\bibitem{MR3286052}
L.~Bowen.
\newblock Entropy theory for sofic groupoids {I}: {T}he foundations.
\newblock {\em J. Anal. Math.}, 124:149--233, 2014.

\bibitem{MR2630067}
L.~P. Bowen.
\newblock A measure-conjugacy invariant for free group actions.
\newblock {\em Ann. of Math. (2)}, 171(2):1387--1400, 2010.

\bibitem{MR1694588}
M.~Gromov.
\newblock Endomorphisms of symbolic algebraic varieties.
\newblock {\em J. Eur. Math. Soc. (JEMS)}, 1(2):109--197, 1999.

\bibitem{MR2854085}
D.~Kerr and H.~Li.
\newblock Entropy and the variational principle for actions of sofic groups.
\newblock {\em Invent. Math.}, 186(3):501--558, 2011.

\bibitem{MR3068400}
D.~Kerr and H.~Li.
\newblock Soficity, amenability, and dynamical entropy.
\newblock {\em Amer. J. Math.}, 135(3):721--761, 2013.

\bibitem{MR2460675}
V.~G. Pestov.
\newblock Hyperlinear and sofic groups: a brief guide.
\newblock {\em Bull. Symbolic Logic}, 14(4):449--480, 2008.

\bibitem{1205.5090}
B.~Seward.
\newblock Finite entropy actions of free groups, rigidity of stabilizers, and a
  howe-moore type phenomenon, 2012.

\bibitem{MR1803462}
B.~Weiss.
\newblock Sofic groups and dynamical systems.
\newblock {\em Sankhy\=a Ser. A}, 62(3):350--359, 2000.
\newblock Ergodic theory and harmonic analysis (Mumbai, 1999).

\bibitem{MR2052281}
B.~Weiss.
\newblock Actions of amenable groups.
\newblock In {\em Topics in dynamics and ergodic theory}, volume 310 of {\em
  London Math. Soc. Lecture Note Ser.}, pages 226--262. Cambridge Univ. Press,
  Cambridge, 2003.

\end{thebibliography}
\end{document}